\newcommand{\NN}{\mathbb{N}}
\newcommand{\ZZ}{\mathbb{Z}}
\newcommand{\G}{\mathscr{G}}
\newcommand{\J}{\mathcal{J}}
\newcommand{\B}{\mathcal{B}}
\newcommand{\FF}{\mathcal{F}}
\newcommand{\PE}{\operatorname{\mathcal{E}}}
\newcommand{\T}{\mathscr{T}}
\newcommand{\E}{\mathscr{E}}
\newcommand{\K}{\mathcal{K}}
\renewcommand{\t}{\mathbf{t}}
\newcommand{\reg}{\operatorname{reg}}
\newcommand{\set}[1]{\left \{ #1 \right \}}
\newcommand{\init}{\operatorname{in}}
\newcommand{\sd}{\triangle}
\newcommand{\ts}{\textstyle}
\renewcommand{\ss}{\scriptstyle}
\newtheorem{theorem}{Theorem}[section]
\newtheorem{lemma}[theorem]{Lemma}
\newtheorem{prop}[theorem]{Proposition}
\newtheorem*{claim*}{Claim}
\theoremstyle{definition}
\newtheorem{definition}[theorem]{Definition}
\newtheorem{example}[theorem]{Example}
\begin{document} 

\title[Eulerian ideals]%
{Eulerian ideals}

\author{J.~Neves}
\address{\emph{J.~Neves}: University of Coimbra, CMUC, Department of Mathematics, 3001-501 Coimbra, Portugal.}
\email{neves@mat.uc.pt}
\thanks{This work was partially supported by the Centre for Mathematics of the University of Coimbra - 
UIDB/00324/2020, funded by the Portuguese Government through FCT/MCTES}

\keywords{Binomial ideal, Castelnuovo--Mumford regularity, $T$-joins}
\subjclass[2010]{13A02, 13H10, 13P25, 05E40; 05C70; 05C45}

\begin{abstract}
Let $G$ be a simple graph and $I(X_G)=\varphi^{-1}(x_i^2-x_j^2 : i,j\in V_G)$, where 
$\varphi \colon K[E_G]\to K[V_G]$ is the homomorphism that sends an edge to the product of its 
vertices. The ideal $I(X_G)$ is Cohen--Macaulay, one-dimensional and binomial. 
If $G$ is bipartite, it is known that the Castelnuovo--Mumford regularity of $I(X_G)$ is equal 
to the maximum cardinality of a set 
of edges having no more than half of the edges of any Eulerian subgraph of $G$.
Here, with respect to the grevlex order associated to an ordering of the edge set of $G$, 
we describe a Gr\"obner basis for $I(X_G)$, and we characterize the standard monomials 
of the ideal $(I(X_G),t_e)$ in terms of even sets of vertices marked with a parity. Using these results,   
we give a combinatorial 
interpretation of the degree of $I(X_G)$,
via the set of even sets of vertices of $G$; and we show that the Castelnuovo--Mumford 
regularity of $I(X_G)$, for any graph, is the maximum cardinality of a set of edges having 
no more than half of the edges of any \emph{even} Eulerian subgraph of $G$ or, equivalently,
the maximum cardinality of a minimum fixed parity $T$-join. 
\end{abstract}
\maketitle

\section{Introduction}
The Eulerian ideal of a graph was introduced by the author, 
Vaz Pinto and Villarreal in \cite{NeVPVi20}. The term Eulerian ideal, which we are introducing here, is owed  
to the relation between a generating set and the set of Eulerian subgraphs with an even cardinality edge set
(even Eulerian subgraphs). 
\smallskip 

Let $G$ be a simple graph without isolated vertices. Denote the set of vertices by $V_G$
and the set of edges by $E_G$. Throughout, we will assume that $E_G$ is a non-empty 
subset of the set of subsets of $V_G$ of cardinality two.
Let $K$ be any field and let 
$$
K[V_G]= K[x_i:i\in V_G],\quad K[E_G] = K[t_h : h\in E_G] 
$$
be the rings of polynomials with coefficients in $K$ whose variables are indexed 
by the vertices and edges of $G$, respectively. Let $\varphi\colon K[E_G]\to K[V_G]$
be given by 
\begin{equation}\label{eq: definition of the map}
\varphi(t_h) = x_ix_j,\;\;  \forall \, h=\set{i,j}\in E_G.
\end{equation}

\begin{definition}\label{def: the ideals} The \emph{Eulerian ideal of $G$} is $I(X_G)=\varphi^{-1}(x_i^2-x_j^2 : i,j\in V_G  )$. 
\end{definition}

The motivation for this definition comes from the notion of va\-ni\-shing ideal over a graph, 
for a \emph{finite} field, introduced by Renter\'ia, Simis and Villarreal in 
\cite{ReSiVi11}. Note that, in the present case, no assumption is made for the field. 
\smallskip 

Let us briefly describe the main features of the Eulerian ideal. 
It is clear from the definition that $I(X_G)$ is a homogeneous ideal. 
Also, $t^2_h-t^2_\ell\in I(X_G)$, for every $h,\ell\in E_G$ and,  
moreover, one can show that any monomial is regular on $K[E_G]/I(X_G)$. 
From this we deduce that $I(X_G)$ has height $|E_G|-1$, 
and that the quotient is a one-dimensional Cohen--Macaulay graded ring. 
Additionally, we know that the ideal is generated by binomials
and these may be associated to even Eulerian subgraphs of the graph (see Definition~\ref{def: Eulerian subgraph}
and Proposition~\ref{prop: key properties}, below).  
\smallskip

These properties were studied in \cite{NeVPVi20} including, also,  
the Castelnuovo--Mumford regularity of $I(X_G)$ in the \emph{bipartite} case. It was shown 
that this invariant is equal to the maximum cardinality of a join, i.e.,  
to the maximum cardinality of a set of edges that has no more than half of the edges 
of any Eulerian subgraph of the graph. This number was termed 
the \emph{maximum vertex join number} by Sol\'e and Zaslavsky and by Frank
(\emph{cf}.~\cite{SoZa93} and \cite{Fr93}). The starting point 
of this work was the extension of this result to the non-bipartite case. 
To achieve this, we define the notion of \emph{parity join}; 
a set of edges that has no more than half of the edges of any \emph{even} cardinality 
Eulerian subgraph of $G$. In Theorem~\ref{thm: regularity and parity joins}
we show that the Castelnuovo--Mumford regularity of 
$I(X_G)$ is the maximum cardinality of a parity join.
\smallskip

To achieve this result, we start by showing that 
a set of homogeneous binomials obtained from the even Eulerian subgraphs, 
together with the set of binomials of the form $t^2_h-t^2_\ell$, for every $h,\ell \in E_G$, form
a Gr\"obner basis for $I(X_G)$ with respect to the graded reverse lexicographic order induced
by a total order of the edges (\emph{cf}.~Theorem~\ref{thm: Grobner basis}). 
The characterization of the outcome of the
division of a monomial by this Gr\"obner basis has lead us to the notion of 
fixed pa\-ri\-ty $T$-joins (\emph{cf.}~Definition~\ref{def: T joins of fixed parity}).
More precisely, by associating a $T$-join to any monomial 
(\emph{cf}.~Definitions~\ref{def: the J operator} and \ref{def: the T set of a monomial}), 
we show that the remainder in a standard expression of the monomial, 
with respect to the aforementioned Gr\"obner basis, yields a $T$-join which has
minimum cardinality among all $T$-joins of same parity cardinality 
(\emph{cf}.~Theorem~\ref{thm: remainder of a monomial by the groebner basis}).
We then describe a bijection between the set of standard monomials of the ideal 
$(I(X_G),t_e)$, with respect to a monomial order as above, and a set of $T$-sets marked with an
element of $\ZZ/2$ (\emph{cf}.~Theorem \ref{thm: standard monomials and T-sets}). 
As the notion of mi\-ni\-mum fixed parity 
$T$-joins and the notion of parity joins are just different ways of describing 
the same set of edges of a graph (cf.~Lemma \ref{lem: parity joins are minimum fixed parity T-joins}), 
the proof of Theorem~\ref{thm: regularity and parity joins}, \emph{i.e.}, the computation 
of the Castelnuovo--Mumford regularity of $I(X_G)$, is then carried
out using fixed parity $T$-joins and the characterization of standard monomials
of $(I(X_G),t_e)$ obtained.
\smallskip

The ideal $I(X_G)$ contains the toric ideal of the graph, $P(G)$, 
which is defined as the kernel of the map 
given by $\eqref{eq: definition of the map}$. These ideals have a longer history and a more intricate nature. 
Their systematic study started with the work of Simis, Vasconcelos and Villarreal \cite{SiVaVi94}.
We know that $P(G)$ is also a binomial ideal and that it is generated by the binomials associated to 
the even closed walks on the graph (\emph{cf}.~\cite[Proposition~3.1]{Vi95}). 
In our case, a set of generators of $I(X_G)$ includes not only these binomials but also any binomial obtained from 
any even Eulerian subgraph of $G$, 
not necessarily connected, and a partition of its edge set into two equal cardinality parts 
(\emph{cf}.~Definitions~\ref{def: Eulerian subgraph} and \ref{def: Groebner basis}).
A contrasting feature to the Eulerian ideal is that, while the former always is, $P(G)$ may rarely be 
Cohen-Macaulay. By way of example, in the recent article \cite{FaKeVT20}, the authors
show that for every pair of integers $d$ and $r$ satisfying $d\geq r\geq 4$, there exists a graph 
yielding a quotient $K[E_G]/P(G)$ with Castelnuovo--Mumford regularity $r$ and $h$-polynomial of degree $d$. 
In recent years, several authors have studied the Castelnuovo--Mumford regularity of the quotient $K[E_G]/P(G)$.
We know that, under mild assumptions, the matching number gives an upper bound for this invariant (\emph{cf}.~\cite{HeHi20}) 
and that lower bounds can be produced from distinguished families of 
induced subgraphs of the graph (\emph{cf}.~\cite{BiO'KVT17,HaBeO'K19}).  
\smallskip

The paper is structured as follows. In the next section we recall the main pro\-perties of $I(X_G)$.
In Section~\ref{sec: gb} we describe a Gr\"obner basis for the Eulerian ideal. 
In Section~\ref{sec: standard mon} we study the combinatorial properties 
of the division of monomials by a Gr\"obner basis and, as a result, 
we give a bijection between the set 
of standard monomials of the ideal $(I(X_G),t_e)$, with respect to 
the graded reverse lexicographic order, in terms of even sets of vertices marked
with a parity. Finally, we apply these results to the computation of the degree and the 
Castelnuovo--Mumford regularity.

\subsection*{Acknowledgments} The author thanks Jens Vygen and Andr\'as Seb\H{o} 
for a helpful discussion on the subject of $T$-joins. The relation between the notions of parity
joins and of fixed parity $T$-joins in Lemma~\ref{lem: parity joins are minimum fixed parity T-joins} 
was pointed out by Andr\'as Seb\H{o}.

\section{Preliminaries}\label{sec: prelim}
Throughout, we will use the multi-index notation to denote monomials in $K[E_G]$. More precisely, 
for each $\alpha\colon E_G \to \mathbb{N}$ of finite support, 
$$
\ts \t^\alpha = \prod\limits_{h \in E_G} t_h^{\alpha(h)}.
$$
We will employ interchangeably the terms \emph{edge} and \emph{variable}. 
In examples, $V_G$ will be a subset of $\NN$ and 
we will abbreviate $t_{\set{i,j}}$ to $t_{ij}$.
\smallskip

For future reference, let us gather in the next proposition 
the main known pro\-per\-ties of the Eulerian ideal of $G$.

\begin{prop}\label{prop: key properties} Let $I(X_G)$ be as in Definition~\ref{def: the ideals} and 
let $\t^\alpha$, $\t^\beta \in K[E_G]$ be relatively prime monomials of the same degree.
\begin{enumerate}
\item $I(X_G)$ is generated by homogeneous binomials.
\item Any monomial is regular on $K[E_G]/I(X_G)$.
\item $K[E_G]/I(X_G)$ is a Cohen--Macaulay, one-dimensional graded ring.  
\item $\t^\alpha-\t^\beta$ belongs to $I(X_G)$ 
if and only if the edge-induced subgraph given by the set of edges raised to odd powers in $\t^\alpha-\t^\beta$ 
has vertices of even degree.
\end{enumerate}
\end{prop}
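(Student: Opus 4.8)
The plan is to prove the four-part Proposition~\ref{prop: key properties} by reducing everything to part~(iv), which is the combinatorial heart of the statement, and then deriving (i)--(iii) from it together with standard facts about the defining ideal $(x_i^2 - x_j^2 : i,j \in V_G) \subseteq K[V_G]$. Let me sketch the key steps.

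First I would analyze the ideal $L = (x_i^2 - x_j^2 : i,j \in V_G)$ in $K[V_G]$. Fixing a reference vertex $v_0$, we have $L = (x_i^2 - x_{v_0}^2 : i \in V_G)$, and $K[V_G]/L$ is a complete intersection: it is a free module over $K[x_{v_0}]$ with basis the squarefree monomials in the remaining variables. In particular $K[V_G]/L$ is Cohen--Macaulay of dimension one, and every nonzerodivisor statement can be checked there. A monomial $\x^{\gamma}$ is a nonzerodivisor mod $L$ because $K[V_G]/L$ is a domain after inverting/modding appropriately — more carefully, one checks that $L$ is a prime-like ``radical with nice associated primes'' or simply that multiplication by $x_{v_0}$ (hence by any monomial, since all $x_i$ are congruent to $\pm x_{v_0}$ up to the relation $x_i^2 = x_{v_0}^2$) is injective on $K[V_G]/L$ using the explicit free-module structure. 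Since $\varphi$ maps monomials to monomials and $I(X_G) = \varphi^{-1}(L)$, regularity of monomials on $K[E_G]/I(X_G)$ follows: if $\t^\alpha f \in I(X_G)$ then $\varphi(\t^\alpha)\varphi(f) \in L$, and as $\varphi(\t^\alpha)$ is a monomial it is a nonzerodivisor mod $L$, so $\varphi(f) \in L$, i.e. $f \in I(X_G)$. This gives (ii). Homogeneity in (i) is immediate since $L$ is generated in degree $2$ and $\varphi$ is graded (sending degree-$1$ generators of $K[E_G]$ to degree-$2$ elements, so $\varphi$ doubles degree but still preserves homogeneity of the preimage).

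For (iv), the plan is a direct computation with the $\pm$ structure of $K[V_G]/L$. Writing $\x^\delta = \varphi(\t^\alpha)$ and $\x^\epsilon = \varphi(\t^\beta)$, the condition $\t^\alpha - \t^\beta \in I(X_G)$ is $\x^\delta - \x^\epsilon \in L$. In $K[V_G]/L$ one has $x_i^2 = x_{v_0}^2 =: z$ for all $i$, so any monomial $\x^\gamma$ reduces to $z^{\lfloor |\gamma|/2 \rfloor} \cdot (\text{squarefree part determined by the parities } \gamma(i) \bmod 2)$, and distinct squarefree monomials are linearly independent over $K[z]$. Hence $\x^\delta \equiv \x^\epsilon \pmod L$ iff $|\delta| = |\epsilon|$ and $\delta(i) \equiv \epsilon(i) \pmod 2$ for every vertex $i$. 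Now $\delta(i) = \sum_{h \ni i} \alpha(h)$ and similarly for $\epsilon$, so the parity condition says $\sum_{h \ni i} (\alpha(h) - \beta(h)) \equiv 0 \pmod 2$ for every vertex $i$; since $\t^\alpha, \t^\beta$ are relatively prime, the edges $h$ with $\alpha(h)$ or $\beta(h)$ odd are exactly the edges raised to an odd power in $\t^\alpha - \t^\beta$, and the condition is precisely that the edge-induced subgraph on those edges has all vertices of even degree. (Equality of degrees $|\delta|=|\epsilon|$ is automatic from $\deg \t^\alpha = \deg \t^\beta$.) This proves (iv).

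Finally, (iii) and the height claim follow formally: $K[E_G]/I(X_G)$ embeds via $\varphi$ into $K[V_G]/L$ (the map is injective by definition of $I(X_G)$ as the preimage), so $\dim K[E_G]/I(X_G) \le \dim K[V_G]/L = 1$; it is at least $1$ because, e.g., the image contains $K[z]$ in the form $\varphi(t_h^2) = \x$-monomial whose class generates a polynomial subring, or simply because $I(X_G)$ is a proper ideal and $\t^2_h - \t^2_\ell$ shows it is not the whole maximal ideal's worth. Thus $\dim = 1$ and $\operatorname{height} I(X_G) = |E_G| - 1$. Cohen--Macaulayness of a one-dimensional graded ring is equivalent to the existence of a homogeneous nonzerodivisor of positive degree, which (ii) supplies (any single variable $t_h$). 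The main obstacle I expect is part~(iv): getting the bookkeeping exactly right between the exponent vectors $\alpha,\beta$ on edges, the induced exponent vectors $\delta,\epsilon$ on vertices, and the parities — in particular making clean use of the ``relatively prime'' hypothesis so that ``odd power in $\t^\alpha-\t^\beta$'' unambiguously names a set of edges, and verifying that the degree condition is subsumed. Establishing cleanly that monomials (equivalently $x_{v_0}$) are nonzerodivisors on $K[V_G]/L$ via the explicit $K[x_{v_0}]$-basis of squarefree monomials is the other place where care is needed, though it is routine once the basis is written down.
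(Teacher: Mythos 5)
Your reduction of everything to the structure of $L=(x_i^2-x_j^2 : i,j\in V_G)$ as a free $K[x_{v_0}]$-module with basis the square-free monomials in the remaining variables is sound, and it does deliver (ii), (iii) and (iv): the normal forms of monomials modulo $L$ give exactly the parity criterion of (iv), and (ii) together with the fact that $(I(X_G),t_e)$ contains every $t_h^2$ gives dimension one and Cohen--Macaulayness. This is more self-contained than the paper, which only sketches (ii)--(iii) in characteristic $\neq 2$ (via vanishing ideals of projective toric subsets) and otherwise cites \cite{NeVPVi20}. However, there is one genuine gap: part (i) asserts that $I(X_G)$ is generated by homogeneous \emph{binomials}, and your proposal only addresses homogeneity. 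Binomiality of $\varphi^{-1}(L)$ is not formal: the preimage of a binomial ideal under a monomial map is binomial, but that is itself a theorem, proved by elimination --- one shows $I(X_G)$ is the contraction to $K[E_G]$ of the binomial ideal $(t_h-x_ix_jz : h=\set{i,j}\in E_G)+(x_i^2-x_j^2 : i,j\in V_G)$ in $K[E_G,V_G,z]$ and uses that Gr\"obner bases, hence elimination ideals, of binomial ideals consist of binomials. This is exactly the ``implicitization argument'' the paper alludes to, and it cannot be skipped: binomial generation is what the proof of Theorem~\ref{thm: Grobner basis} leans on.

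Two smaller points need repair. Your justification that every variable is a nonzerodivisor modulo $L$ --- ``all $x_i$ are congruent to $\pm x_{v_0}$'' --- is false in the quotient ring, since $x_i^2=x_{v_0}^2$ does not force $x_i=\pm x_{v_0}$ when $K[V_G]/L$ is not reduced or the characteristic is $2$; the correct argument is either that $x_if=0$ implies $x_{v_0}^2f=x_i^2f=0$ and then $f=0$ by freeness, or a direct check that multiplication by $x_i$ permutes the $K[x_{v_0}]$-basis up to factors of $x_{v_0}^2$. Likewise, $\dim K[E_G]/I(X_G)\le \dim K[V_G]/L$ does not follow from injectivity alone for non-domains; either compare Hilbert functions through the graded injection or, more simply, note that $t_h^2\in (I(X_G),t_e)$ for every $h$, so $K[E_G]/(I(X_G),t_e)$ is Artinian. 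Finally, the normal form of $\x^\gamma$ is $x_{v_0}^{|\gamma|-k}\prod x_i$, the product over the $k$ vertices $i\neq v_0$ with $\gamma(i)$ odd, not $z^{\lfloor|\gamma|/2\rfloor}$ times a square-free part; this slip does not affect the equivalence you extract from it.
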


The proof of (i) uses a standard implicitization argument. One shows that 
$I(X_G)$ is the intersection with $K[E_G]$ of the ideal generated by 
$$
(t_h - x_ix_j z : h=\set{i,j}\in E_G) \cup (x_i^2-x_j^2 : i,j\in V_G)
$$
in the ring $K[E_G,V_G,z]$. 
The proof of (ii) and (iii) are straightforward if the characteristic of the field
is not $2$, in which case, $I(X_G)$ is the vanishing ideal of a set of points in projective space 
with nonzero coordinates (a projective toric subset). For the proof of (i) and (ii) in the general case, the proof 
of (iv) and further details, we refer the reader to \mbox{\cite[Propositions~2.1, 2.2, 2.5 and 2.8]{NeVPVi20}}.

\begin{definition}\label{def: the J operator}
Given $\t^\alpha \in K[E_G]$, let 
$\J(\t^\alpha)=\set{h : \alpha(h)\text{ is odd}}\subset E_G$.
\end{definition}

Using (ii) and (iv) of Proposition~\ref{prop: key properties}, 
we deduce that if $\t^\alpha-\t^\beta$ is a homogeneous binomial then   
$\t^\alpha - \t^\beta \in I(X_G)$ if and only if the edge-induced subgraph of $G$ given by  
symmetric difference $\J(\t^\alpha)\sd \J(\t^\beta)$ has vertices of even degree, i.e., is an Eulerian subgraph. 
Since $\J(\t^\alpha)\sd \J(\t^\beta)\equiv_2 \deg(\t^\alpha) + \deg(\t^\beta)$, 
the Eulerian subgraphs arising from homogeneous binomials of $I(X_G)$ have an edge set of even cardinality.
Let us fix the terminology.

\begin{definition}\label{def: Eulerian subgraph}
A subgraph of $G$ is called \emph{even Eulerian} if its vertices have even degrees and
its edge set has even cardinality.
\end{definition}

\begin{figure}[ht]
\begin{tikzpicture}
\draw[line width=1pt] (4.9,2.34) node[anchor= south] {$\scriptstyle 1$} -- (3.48,0.98);
\draw[line width=1pt] (3.48,0.98) node[anchor= north east] {$\scriptstyle 2$} -- (5.5,0.5);
\draw[line width=1pt] (5.5,0.5) node[anchor= north] {$\scriptstyle 3$} -- (4.9,2.34);
\draw[line width=1pt] (8.02,0.9) node[anchor= north] {$\scriptstyle 4$}-- (9.34,0.56);
\draw[line width=1pt] (9.34,0.56) node[anchor= north] {$\scriptstyle 5$} -- (9.96,1.56);
\draw[line width=1pt] (9.96,1.56) node[anchor= west] {$\scriptstyle 6$} -- (9.2,2.54);
\draw[line width=1pt] (9.2,2.54) node[anchor= south] {$\scriptstyle 7$} -- (8.04,2.16);
\draw[line width=1pt] (8.04,2.16) node[anchor= south] {$\scriptstyle 8$} -- (8.02,0.9);
\draw [color=gray] (8.02,0.9)-- (9.96,1.56);
\draw [color=gray] (5.5,0.5)-- (6.96,0.62);
\draw [color=gray] (6.96,0.62)-- (8.02,0.9);
\draw [color=gray] (8.02,0.9)-- (9.2,2.54);
\draw [color=gray] (3.48,0.98)-- (6.4,1.6);
\draw [color=gray] (6.4,1.6)-- (6.96,0.62);
\draw [color=gray] (3.2,2.46)-- (4.9,2.34);
\draw [color=gray] (4.9,2.34)-- (6.4,1.6);
\draw [color=gray] (6.4,1.6)-- (8.04,2.16);
\begin{scriptsize}
\fill [color=black] (4.9,2.34) circle (2pt);
\fill [color=black] (3.48,0.98) circle (2pt);
\fill [color=black] (5.5,0.5) circle (2pt);
\fill [color=black] (6.96,0.62) circle (2pt);
\fill [color=black] (8.02,0.9) circle (2pt);
\fill [color=black] (9.34,0.56) circle (2pt);
\fill [color=black] (9.96,1.56) circle (2pt);
\fill [color=black] (9.2,2.54) circle (2pt);
\fill [color=black] (8.04,2.16) circle (2pt);
\fill [color=black] (6.4,1.6) circle (2pt);
\fill [color=black] (3.2,2.46) circle (2pt);
\end{scriptsize}
\end{tikzpicture}
\caption{An even Eulerian subgraph of $G$} 
\label{fig: even Eulerian subgraph}
\end{figure}
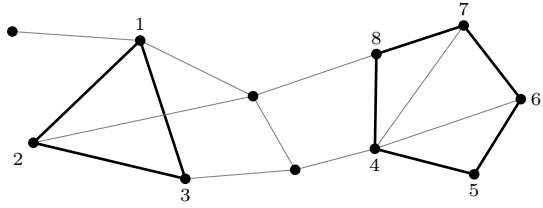

The subgraph in Figure~\ref{fig: even Eulerian subgraph} represented in bold is an even 
Eulerian subgraph. We emphasize that an even Eulerian subgraph is not assumed to 
be a connected graph, or a spanning subgraph.
\smallskip

Let $M$ be a nonzero finitely generated graded module over a 
polynomial ring $S$, and let $F_i = \oplus_{j\in \ZZ} S(-j)^{\beta_{ij}}$ be the free graded modules in 
a minimal graded free resolution of $M$,  
$0\to F_c\to \cdots \to F_1 \to F_0\to M$.
Then the Castelnuovo--Mumford regularity of $M$, referred to in the remainder 
of the text simply by \emph{regularity}, is defined by
$\reg M =\max_{i,j}\set {j-i : \beta_{ij}\not = 0}$.
If $M=0$, we adopt the convention $\reg M = 1$, so that, in particular, when 
$I(X_G) = 0$, $\reg I(X_G) = 1$; such is the case when $G$ consists of a single edge. 
If $M$ is Cohen--Macaulay and $\mathbf{f}\subset S$ is a regular sequence on $M$ of maximum length, 
consisting of elements of degree one, then by \cite[Proposition~4.14]{Ei05} the re\-gu\-la\-ri\-ty of $M$ 
is the largest degree of a nonzero homogeneous element of $M/(\mathbf{f})M$. In our case,
given that $\reg I(X_G) = \reg K[E_G]/I(X_G) +1$, we have the following result.

\begin{prop}\label{prop: regularity reduction}
Let $e\in E_G$ and let $N=K[E_G]/(I(X_G),t_e)$. Then 
$$
\reg I(X_G) = \max \set{d : N_d\not = 0} +1.
$$
\end{prop}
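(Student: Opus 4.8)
The plan is to deduce the formula by combining Proposition~\ref{prop: key properties} with the result of Eisenbud recalled in the paragraph preceding the statement. Write $M := K[E_G]/I(X_G)$. First I would note that, by part~(ii) of Proposition~\ref{prop: key properties}, the variable $t_e$ is a nonzerodivisor on $M$, and it is homogeneous of degree one; by part~(iii), $M$ is a nonzero Cohen--Macaulay ring of Krull dimension one. Consequently $\set{t_e}$ is an $M$-regular sequence of maximum length consisting of linear forms, and $N = M/t_eM$ is a graded $K$-algebra of finite length, so $\max\set{d : N_d\neq 0}$ is well defined.

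Next I would apply \cite[Proposition~4.14]{Ei05} to $M$ and the maximal linear regular sequence $\set{t_e}$, exactly in the form quoted in the text: this gives $\reg M = \max\set{d : N_d\neq 0}$. It then remains to combine this with the identity $\reg I(X_G) = \reg M + 1$ already recorded above. (For completeness: this identity follows in the standard way from the short exact sequence $0\to I(X_G)\to K[E_G]\to M\to 0$ together with $\reg K[E_G]=0$, once one observes that $\reg I(X_G)\geq 1$ --- a proper nonzero homogeneous ideal is minimally generated in positive degree; and the degenerate case $I(X_G)=0$, which occurs precisely when $G$ is a single edge, is covered by the convention $\reg M = 1$ for $M=0$, since then $N=K$ and both sides of the asserted equality are $1$.)

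I do not expect a real obstacle: all three ingredients --- regularity of $t_e$ on $M$, one-dimensionality together with Cohen--Macaulayness of $M$, and Eisenbud's reduction statement --- are either recalled verbatim in the excerpt or are entirely standard. The one step that does deserve an explicit line is checking that $\set{t_e}$ is a \emph{maximal} regular sequence on $M$, i.e.\ that $\dim M = 1$ and $M\neq 0$; this is exactly what guarantees that $N$ is Artinian and places us within the hypotheses of \cite[Proposition~4.14]{Ei05}.
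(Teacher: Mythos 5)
Your argument is correct and is essentially the paper's own: the paper derives this proposition directly from the preceding paragraph by noting that $t_e$ is a degree-one regular element on the one-dimensional Cohen--Macaulay module $M=K[E_G]/I(X_G)$ (parts (ii) and (iii) of Proposition~\ref{prop: key properties}), applying \cite[Proposition~4.14]{Ei05}, and using $\reg I(X_G)=\reg M+1$. Your extra remarks on maximality of the regular sequence and the degenerate case $I(X_G)=0$ only make explicit what the paper leaves implicit.
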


In \cite[Proposition~3.2]{NeVPVi20}, the regularity of $I(X_G)$ for some special families
of graphs is given. We list them in Table~\ref{tbl: values for special families of graphs}. 
\begin{table}[ht]
\renewcommand{\arraystretch}{1.3}
\begin{tabular}{l|l}
$G$ & $\reg I(X_G)$\\
\hline 
Forest & $|E_G|$ \\
Even cycle & $|E_G|/2$ \\
Complete bipartite $\K_{a,b}$ & $\max\set{a,b}$\\
Non-bipartite, uni-cyclic & $|E_G|$ \\
Complete graph, $\K_n,\, n\geq 4$ & $\lfloor\frac{n}{2} \rfloor+1$ \\
\end{tabular}
\bigskip 

\caption{$\reg I(X_G)$ for special families of graphs.} 
\label{tbl: values for special families of graphs}
\end{table}

\section{A Gr\"obner basis}\label{sec: gb}
Let $G$ be as in Figure~\ref{fig: even Eulerian subgraph} and, associated to 
the even Eulerian subgraph in bold, consider the monomials
$\t^\alpha = t_{12}t_{23}t_{13}t_{48}$ and $\t^\beta = t_{45}t_{56}t_{67}t_{78}$.
Let $\varphi$ be the map given by \eqref{eq: definition of the map}. Then,
using Definition~\ref{def: the ideals},
$$
\varphi (\t^\alpha-\t^\beta) = (x_1^2x_2^2x_3^2 - x_5^2x_6^2x_7^2)x_4x_8\implies \t^\alpha-\t^\beta \in I(X_G).
$$
By (iv) of Proposition~\ref{prop: key properties}, the conclusion is the same if we take 
any other even Eulerian subgraph of $G$ and any other partition 
of its edge set in two parts of equal cardinality. This motivates the following definition.

\begin{definition}\label{def: Groebner basis}
A binomial $\t^\alpha-\t^\beta$ is called 
\emph{Eulerian} if $\t^\alpha$ and $\t^\beta$ are distinct, relatively prime, square-free monomials
of same degree and the edge-induced subgraph given by $\J(\t^\alpha)\cup \J(\t^\beta)$ 
is an even Eulerian subgraph of $G$. Denoting the set of Eulerian binomials by $\E$
and $\T = \set{t_h^2 - t_\ell^2 : h,\ell \in E_G}$, define $\G= \T \cup \E$. 
\end{definition}

Of course, not all homogeneous binomials in $I(X_G)$ are Eulerian;
it suffices that one of $\t^\alpha$ or $\t^\beta$ is not square-free. Clearly, 
$\G$ is a finite set. We will show that it is a Gr\"obner 
basis of $I(X_G)$ for any graded reverse lexicographic order on $K[E_G]$, in the following sense.

\begin{definition}\label{def: grevlex order}
By a grevlex order on $K[E_G]$ we will mean the graded reverse lexicographic 
order on $K[E_G]$ induced by $t_{\epsilon(1)}\succ t_{\epsilon(2)} \succ \cdots \succ t_{\epsilon(s)}$, 
where $\epsilon$ is a bijection 
$\epsilon\colon \set{1,\dots,s}\to E_G$. If $e=\epsilon_{s}$ is the last edge, then 
such a monomial order will be referred to as a grevlex order on $K[E_G]$ with $t_e$ last.
\end{definition}

Recall that if $\t^\alpha$ and $\t^\beta$ are two 
monomials of the same degree and $h$ is the last edge in the support of $\beta-\alpha$, 
then $\t^\alpha \succ \t^\beta$ in the grevlex order if and only if $\beta(h)>\alpha(h)$. 
In particular, if $\t^\alpha$ and $\t^\beta$ are also square-free, then 
$\init_\prec(\t^\alpha-\t^\beta) = \t^\alpha$ if and only if $\J(\t^\beta)$ contains the 
last edge of $\J(\t^\alpha)\sd \J(\t^\beta)$.

\begin{theorem}\label{thm: Grobner basis}
Assume $|E_G|>1$ and fix $\prec$ a grevlex order on $K[E_G]$. Then $\G$ is a Gr\"obner basis of $I(X_G)$ with respect to $\prec$.
\end{theorem}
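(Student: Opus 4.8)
The plan is to use Buchberger's criterion: show that for every pair of elements of $\G$, the $S$-polynomial reduces to zero modulo $\G$. First I would dispose of the pairs coming from $\T = \set{t_h^2 - t_\ell^2 : h,\ell\in E_G}$: these are the defining binomials of (a toric-like quotient of) a polynomial ring, and the $S$-polynomial of $t_h^2 - t_\ell^2$ and $t_{h'}^2 - t_{\ell'}^2$ reduces to zero by the usual argument (the leading terms are squares of distinct variables, so when they are not coprime the overlap is controlled and one gets a telescoping cancellation through other elements of $\T$; when they are coprime, Buchberger's first criterion applies). So the real content is in the pairs involving at least one Eulerian binomial.

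Next I would set up notation for reduction by $\G$. The key observation, already recorded in the excerpt, is that $\init_\prec(\t^\alpha - \t^\beta) = \t^\alpha$ exactly when $\J(\t^\beta)$ contains the last edge of the Eulerian subgraph $\J(\t^\alpha)\sd\J(\t^\beta)$; and for $t_h^2 - t_\ell^2$ the leading term is $t_h^2$ iff $h \succ \ell$. Reduction by $\T$ lets us replace any even power $t_h^2$ by $t_\ell^2$ for $\ell$ smaller, so modulo $\T$ every monomial is congruent to $t_{\min}^{2k}$ times a square-free monomial, where $t_{\min}$ is the $\prec$-smallest edge; I would use this to show that a square-free monomial $\t^\gamma$ is a standard monomial (not divisible by any leading term in $\init_\prec(\G)$) precisely when $\J(\t^\gamma) = \supp(\gamma)$ contains, for every even Eulerian subgraph $H$ of $G$, at least half of the edges of $H$ — equivalently $\supp(\gamma)$ meets the "heavy half with respect to the last edge of $H$." The heart of the matter is then: given an Eulerian binomial $f = \t^\alpha - \t^\beta$ and a second element $g\in\G$ whose leading term shares a variable with $\init_\prec f = \t^\alpha$, form $S(f,g)$, and exhibit an explicit standard expression. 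Since $f$ is square-free in both terms, the overlap of $\init_\prec f$ with $\init_\prec g$ is a single variable $t_c$ (if $g\in\T$, $\init_\prec g = t_c^2$, and the overlap is $t_c$; if $g$ is Eulerian, the overlap is $t_c$ as well, or a product of distinct variables). After clearing, $S(f,g)$ becomes, up to sign, a homogeneous binomial $\t^{\alpha'} - \t^{\beta'}$ in $I(X_G)$ whose associated symmetric difference $\J(\t^{\alpha'})\sd\J(\t^{\beta'})$ is the symmetric difference of two even Eulerian subgraphs of $G$, hence again an even Eulerian subgraph (possibly with repeated edges cancelling, which reduction by $\T$ handles). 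I would then argue by induction on $\prec$ (the leading term strictly drops) that this binomial reduces to zero, using at each step either an Eulerian binomial associated to a connected even Eulerian "piece" of the symmetric difference — decomposing the even Eulerian subgraph into edge-disjoint closed walks and peeling one off — or an element of $\T$ to kill a repeated edge.

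The main obstacle I expect is the bookkeeping in this last reduction step: one must verify that after forming $S(f,g)$ and moving repeated variables into even powers (absorbed by $\T$), the residual square-free binomial is again \emph{Eulerian} in the precise sense of Definition~\ref{def: Groebner basis} — i.e. that the two monomials are relatively prime and square-free and that their union is genuinely an even Eulerian subgraph — or else that it is already reducible by a smaller element of $\G$. The subtlety is that the symmetric difference $\J(\t^{\alpha'})\sd\J(\t^{\beta'})$ need not split evenly the way $\t^{\alpha'},\t^{\beta'}$ do; so I would show that any even Eulerian subgraph admits a partition of its edge set into two equal-size parts realizing a given Eulerian binomial whose leading term is $\prec$-smaller than $\init_\prec f$, by choosing the partition so that the globally last edge of the subgraph lands in the lower-indexed half — this is exactly the flexibility built into allowing \emph{all} partitions into equal halves in the definition of $\E$. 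Once that is in place, a standard induction on the grevlex order closes the argument, and Buchberger's criterion yields the theorem.
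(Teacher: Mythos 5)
Your plan covers only half of what the theorem requires. Buchberger's criterion shows that $\G$ is a Gr\"obner basis of the ideal \emph{it generates}; to conclude it is a Gr\"obner basis of $I(X_G)$ you must also prove $(\G)=I(X_G)$, and the inclusion $I(X_G)\subseteq(\G)$ is not formal: $I(X_G)$ is defined as a preimage, and one has to show every homogeneous binomial in it is a combination of Eulerian binomials and the $t_h^2-t_\ell^2$. The paper devotes the first half of its proof to exactly this, by induction on the degree of a binomial $\t^\alpha-\t^\beta$ with $\gcd(\t^\alpha,\t^\beta)=1$: if some $t_h^2$ divides $\t^\alpha$, one subtracts $(t_h^2-t_\ell^2)\t^\gamma$ and cancels a variable $t_\ell$ using the fact that monomials are regular on $K[E_G]/I(X_G)$ (Proposition~\ref{prop: key properties}(ii)) to drop the degree. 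Your proposal never addresses this step, so as written it does not prove the theorem.

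On the Buchberger computation itself, your overall shape is right and your rebalancing idea is close to the paper's Lemma~\ref{lem: two binomials nearly Eulerian}, but two details need repair. First, your characterization of standard monomials is inverted: if $\supp(\gamma)$ contains at least half of the edges of some even Eulerian subgraph (in particular a half avoiding its last edge), then $\t^\gamma$ is \emph{divisible} by a leading term of $\E$, hence not standard; the correct condition is ``at most half'' (this is the parity join condition of Definition~\ref{def: parity join}). Second, after forming $S(f,g)$ for $f,g\in\E$ and stripping squares via $\T$, you are handed a \emph{specific} unbalanced pair $\t^{\alpha'},\t^{\beta'}$, and to reduce it you need an element of $\E$ whose leading term divides $\t^{\alpha'}$; it is not enough to find some equal partition of the symmetric difference with the last edge in the lower half, because that leading half need not lie inside $\supp(\alpha')$. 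The paper's lemma fixes this by transferring precisely the $d$ last variables of $\J(\t^{\alpha'})$ over to the other side, so the resulting leading term $\t^{\alpha'-\xi}$ divides $\t^{\alpha'}$ by construction. Relatedly, ``peeling off a connected piece or closed walk'' can fail, since a connected component of an even Eulerian subgraph may have an odd number of edges (two disjoint triangles); the definition of $\E$ deliberately works with partitions of the whole edge set to avoid this.
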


\begin{proof}
Let us begin by showing that $I(X_G)=(\G)$. In view of Proposition~\ref{prop: key properties},
the inclusion $(\G)\subset I(X_G)$ is clear. Since $I(X_G)$ is generated by binomials, to 
prove the opposite inclusion it suffices to show that any 
homogeneous binomial $\t^\alpha - \t^\beta$ in $I(X_G)$ belongs to $(\G)$. 
By (ii) of Proposition~\ref{prop: key properties} we may assume $\gcd(\t^\alpha,\t^\beta) = 1$.
We will use induction on the degree of the binomial. 
By (iv) of Proposition~\ref{prop: key properties}, there are no homogeneous binomials in $I(X_G)$
of degree one, so the base case when degree is equal to two. In this case, 
either both $\t^\alpha$ and $\t^\beta$ are squares of variables, 
or neither of them is, in which case $\t^\alpha-\t^\beta$ is an Eulerian binomial. 
Either way, we get $\t^\alpha-\t^\beta\in (\G)$.  
Suppose now that $\t^\alpha-\t^\beta \in I(X_G)$ has degree larger than or equal to three.
If $\t^\alpha$ and $\t^\beta$ are also square-free then $\t^\alpha-\t^\beta\in \E$.
Suppose that this is not the case, suppose that, say, $\t^\alpha$ is divisible by $t_h^2$, for some $h\in E_G$.
Then, choose $\ell\in E_G$ such that $t_\ell$ divides $\t^\beta$ and write $\t^\alpha = t_h^2 \t^\gamma$ and 
$\t^\beta = t_\ell \t^\mu$. From 
$$
\t^\alpha-\t^\beta = t_\ell(t_\ell \t^\gamma - \t^\mu)  + (t_h^2 - t_\ell^2) \t^\gamma,
$$
and the fact that $(t_h^2 - t_\ell^2) \t^\gamma \in (\G)\subset I(X_G)$, we deduce that      
$$
t_\ell(t_\ell \t^\gamma - \t^\mu) \in I(X_G) \implies t_\ell \t^\gamma - \t^\mu \in I(X_G).
$$
By induction, we get $t_\ell \t^\gamma - \t^\mu \in (\G)$ and thus $\t^\alpha-\t^\beta \in (\G)$. 
This concludes the proof that $I(X_G) = J=(\G)$.
\smallskip

To prove that $\G$ is a Gr\"obner basis of $I(X_G)$ with respect to $\prec$, 
let us show that $S(f,g)$ reduces to zero with respect to $\G$, for every $f,g\in \G$.
We only need to consider $f$ and $g$ for which $\gcd(\init_\prec(f),\init_\prec(g))\not = 1$.
There are three cases. 
\smallskip

\noindent
If $f,g\in \T$ and $\gcd(\init_\prec(f),\init_\prec(g))\not = 1$, then, without loss of generality, we may assume 
$f$ and $g$ are of the form $f = t_h^2 - t^2_\ell$ and $g=t_h^2 - t^2_e$ with $\init_\prec(f) = \init_\prec(g) = t^2_h$.
Then $S(f,g)= f-g = t_e^2-t^2_\ell \in \T$.
\smallskip

\noindent
If $f\in \T$, $g\in \E$ and $\gcd(\init_\prec(f),\init_\prec(g))\not = 1$, 
then, without loss of generality, we may assume that \mbox{$f=t^2_h-t^2_e$} and 
$g=t_h\t^\gamma - t_\ell\t^\mu$ with $\init_\prec(f)=t^2_h$, $\init_\prec(g) = t_h\t^\gamma$ and 
$\ell$ the last variable of $\J(t_h\t^\gamma)\cup\J(t_\ell\t^\mu)$. Then, since 
$t_h\t^\mu - t_\ell \t^\gamma \in \E$,  
and $\init_\prec (t_h\t^\mu - t_\ell \t^\gamma) = t_h\t^\mu$,
$$
S(f,g) =  t_\ell t_h\t^\mu -t_e^2 \t^\gamma\stackrel{\E}{\longrightarrow}
(t_\ell^2 - t_e^2)\t^\gamma \stackrel{\T}{\longrightarrow} 0.
$$

\noindent
Finally, suppose that $f,g\in \E$ and $\gcd(\init_\prec(f),\init_\prec(g))\not = 1$. Let $f = \t^{\delta+\gamma} - \t^\epsilon$ and 
$g = \t^{\delta+\mu} - \t^\nu$, with \mbox{$\init_\prec(f)=\t^{\delta + \gamma}$}, $\init_\prec(g) = \t^{\delta + \mu}$ 
and $\gcd(\t^\gamma,\t^\mu)=1$. Then 
\begin{equation}\label{eq: L503}
S(f,g) = \t^{\gamma+\nu} - \t^{\mu+\epsilon}.
\end{equation}
Since $f,g\in \E$, each of the two sets $\J(\t^\delta\t^\gamma)\cup\J(\t^\epsilon)$ and 
$\J(\t^\delta\t^\mu)\cup \J(\t^\nu)$ defines an even Eulerian subgraph of $G$. It follows that 
the symmetric difference of the two sets also defines an even Eulerian subgraph of $G$. As
$$
\renewcommand{\arraystretch}{1.3}
\begin{array}{l}
\phantom{=}(\J(\t^\delta\t^\gamma)\cup\J(\t^\epsilon))\sd (\J(\t^\delta\t^\mu)\cup \J(\t^\nu))
= \J(\t^{\gamma+\nu})\sd \J(\t^{\mu+\epsilon}),
\end{array}
$$
this means that $\J(\t^{\gamma+\nu})\sd \J(\t^{\mu+\epsilon})$
defines an even Eulerian subgraph of $G$. If  
$\t^{\gamma+\nu}$ and $\t^{\mu+\epsilon}$ are relatively 
prime and square-free we get $S(f,g)\in \E$ which, trivially, 
reduces to zero with respect to $\G$. 
Suppose this is not the case. Consider the monomials
$\t^\zeta = \gcd(\t^{\gamma+\nu} , \t^{\epsilon +\mu})$, $ \t^\phi = \gcd(\t^{\gamma} , \t^{\nu})$ and 
$\t^\psi = \gcd (\t^{\mu} ,\t^{\epsilon} )$. As, by assumption, 
$\gcd(\t^{\gamma},\t^\epsilon) = \gcd(\t^\mu,\t^\nu)=1$, we deduce that 
$\gcd(\t^\zeta,\t^\phi)=\gcd(\t^\zeta,\t^\psi)=1$. Hence, 
there exist $\t^\alpha,\t^\beta \in K[E_G]$, relatively prime, square-free 
monomials, such that   
\begin{equation}\label{eq: L501}
\t^{\gamma+\nu-\zeta} = \t^\alpha (\t^\phi)^2\quad \text{and} \quad \t^{\epsilon+\mu-\zeta} = \t^\beta (\t^\psi)^2.
\end{equation}
From the fact that $\J(\t^{\gamma+\nu})\sd \J(\t^{\mu+\epsilon})$ defines an even Eulerian subgraph 
we deduce that $\J(\t^\alpha)\sd \J(\t^\beta)$ also defines an even Eulerian subgraph. 
Let $a = \deg(\t^\phi)$ and $b=\deg (\t^\psi)$. Then, letting $e$ denote the last variable of $E_G$, 
\begin{equation}\label{eq: L516}
\renewcommand{\arraystretch}{1.3}
\begin{array}{l}
\deg(\t^\alpha)+2a = \deg S(f,g) - \deg(\t^\zeta) = \deg(\t^\beta) + 2b, \\
S(f,g) = \t^\zeta(\t^\alpha (\t^\phi)^2 - \t^\beta (\t^\psi)^2) \stackrel{\T}{\longrightarrow} \t^\zeta (\t^\alpha t_e^{2a} - \t^\beta t_e^{2b}).
\end{array}
\end{equation}
If $a=b$ then $\deg(\t^\alpha)=\deg(\t^\beta)$ and thus $\t^\alpha-\t^\beta\in\E$. 
From \eqref{eq: L516} we deduce that $S(f,g)$ reduces to zero with respect to $\E$.
Consider now the case $a\not = b$, and assume, without loss of generality, that $a<b$ and therefore 
$\deg(\t^\alpha)>\deg(\t^\beta)$.  
By Lemma~\ref{lem: two binomials nearly Eulerian} (proved below) there exists $\t^\xi\in K[E_G]$ such that 
$\t^{\alpha-\xi} - \t^{\beta+\xi}\in \E$  and $\init_\prec(\t^{\alpha-\xi} - \t^{\beta+\xi}) = \t^{\alpha-\xi}$.
It follows that $2\deg(\t^\xi) = \deg(\t^\alpha)-\deg(\t^\beta)=2b-2a$.
Continuing from \eqref{eq: L516}, 
$$
S(f,g) \stackrel{\G}{\longrightarrow} 
\t^\zeta \t^{\beta}((\t^\xi)^2 t_e^{2a}  -  t_e^{2b} )
\stackrel{\T}{\longrightarrow} 0.
$$
This finishes the proof of the theorem.
\end{proof}

\begin{lemma}\label{lem: two binomials nearly Eulerian}
Let $\prec$ be a grevlex order on $K[E_G]$. 
Suppose that  $\t^\alpha,\t^\beta\in K[E_G]$ are relatively prime, square-free monomials, 
with $\deg(\t^\alpha)>\deg(\t^\beta)$ and such that $\J(\t^\alpha)\cup\J(\t^\beta)$
defines an even Eulerian subgraph. Then there exists $\t^\xi\in K[E_G]$
dividing $\t^\alpha$ such that $\t^{\alpha-\xi}-\t^{\beta+\xi}\in \E$ and 
$\init_ \prec (\t^{\alpha-\xi}-\t^{\beta+\xi}) = \t^{\alpha-\xi}$.
In particular, $\t^\alpha$ is divisible by a leading term of an element of $\E$.
\end{lemma}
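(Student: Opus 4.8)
The plan is to produce $\t^\xi$ explicitly as a square-free monomial supported on a carefully chosen subset of $\J(\t^\alpha)$, and then to verify the two assertions by bookkeeping with supports. The starting point is that, since $\t^\alpha$ and $\t^\beta$ are square-free and relatively prime, the sets $\J(\t^\alpha)$ and $\J(\t^\beta)$ are disjoint and their union $\J(\t^\alpha)\cup\J(\t^\beta)$ is precisely the edge set of the given even Eulerian subgraph; in particular $\deg(\t^\alpha)+\deg(\t^\beta)=|\J(\t^\alpha)\cup\J(\t^\beta)|$ is even, so $d:=\deg(\t^\alpha)-\deg(\t^\beta)$ is even and, by hypothesis, positive. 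Writing $d=2k$ with $k\ge 1$, one has $\deg(\t^\alpha)=2k+\deg(\t^\beta)\ge 2k\ge k$, so $\J(\t^\alpha)$ contains at least $k$ edges.

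Next I would fix $h^\ast$ to be the $\prec$-last edge of $\J(\t^\alpha)\cup\J(\t^\beta)$ and choose a $k$-element subset $S\subseteq\J(\t^\alpha)$ subject to the single extra requirement that $h^\ast\in S$ whenever $h^\ast\in\J(\t^\alpha)$; this is possible because $1\le k\le|\J(\t^\alpha)|$. Set $\t^\xi=\prod_{h\in S}t_h$, so that $\t^\xi$ divides $\t^\alpha$. Since $S\subseteq\J(\t^\alpha)$ and $\J(\t^\alpha)\cap\J(\t^\beta)=\emptyset$, one has $\supp(\alpha-\xi)=\J(\t^\alpha)\setminus S$ and $\supp(\beta+\xi)=\J(\t^\beta)\cup S$, a disjoint union; hence $\t^{\alpha-\xi}$ and $\t^{\beta+\xi}$ are square-free and relatively prime, and $\J(\t^{\alpha-\xi})\cup\J(\t^{\beta+\xi})=\J(\t^\alpha)\cup\J(\t^\beta)$, which defines an even Eulerian subgraph.

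It then remains to confirm the numerical and ordering conditions. The two monomials have the same degree because $2\deg(\t^\xi)=2k=\deg(\t^\alpha)-\deg(\t^\beta)$; that common degree equals $(\deg(\t^\alpha)+\deg(\t^\beta))/2\ge 1$, so relative primality forces $\t^{\alpha-\xi}\ne\t^{\beta+\xi}$, and every clause of Definition~\ref{def: Groebner basis} holds, i.e. $\t^{\alpha-\xi}-\t^{\beta+\xi}\in\E$. For the leading term I would invoke the grevlex criterion recalled just before Theorem~\ref{thm: Grobner basis}: for relatively prime square-free monomials $\t^p,\t^q$ of equal degree, $\init_\prec(\t^p-\t^q)=\t^p$ iff $\J(\t^q)$ contains the $\prec$-last edge of $\J(\t^p)\sd\J(\t^q)$. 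In our situation $\J(\t^{\alpha-\xi})\sd\J(\t^{\beta+\xi})=\J(\t^\alpha)\cup\J(\t^\beta)$, whose $\prec$-last edge is $h^\ast$, and $h^\ast\in\J(\t^\beta)\cup S=\J(\t^{\beta+\xi})$ by the choice of $S$ (if $h^\ast\in\J(\t^\alpha)$ then $h^\ast\in S$, otherwise $h^\ast\in\J(\t^\beta)$). Thus $\init_\prec(\t^{\alpha-\xi}-\t^{\beta+\xi})=\t^{\alpha-\xi}$, and since this monomial divides $\t^\alpha$, the last sentence of the statement follows at once.

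I do not anticipate a genuine obstacle here: the whole argument is a matter of choosing $S$ correctly and tracking supports. The only two places that call for a moment's care are the parity computation showing that $d$ is even (which rests on the even-cardinality condition built into the definition of an even Eulerian subgraph) and the inequality $k\le|\J(\t^\alpha)|$ that allows $S$ to be chosen containing $h^\ast$; both are immediate from $\deg(\t^\alpha)>\deg(\t^\beta)\ge 0$.
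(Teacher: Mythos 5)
Your proposal is correct and follows essentially the same route as the paper: the paper takes $\t^\xi$ to be the product of the $d$ last variables of $\J(\t^\alpha)$ (which automatically contains the last edge of the union whenever that edge lies in $\J(\t^\alpha)$), while you allow any $k$-element subset subject to that same containment condition and then spell out the support bookkeeping and the grevlex criterion in more detail. No gaps.
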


\begin{proof}
Set $2d=\deg(\t^\alpha)-\deg(\t^\beta)$ and let $\t^\xi$ be the product of the $d$ last variables in $\J(\t^\alpha)$. Then 
$\t^{\alpha-\xi}$ and $\t^{\beta+\xi}$ are relatively prime, square-free monomials of equal degree 
such that $\J(\t^{\alpha-\xi})\cup\J(\t^{\beta+\xi}) = \J(\t^\alpha)\cup \J(\t^\beta)$ defines an even Eulerian 
subgraph. We deduce that $\t^{\alpha-\xi}-\t^{\beta+\xi}\in \E$. As the last edge in 
$\J(\t^{\alpha-\xi})\cup\J(\t^{\beta+\xi})$ belongs to $\J(\t^{\beta+\xi})$ we deduce
that $\init_ \prec (\t^{\alpha-\xi}-\t^{\beta+\xi}) = \t^{\alpha-\xi}$.
\end{proof}

\section{Standard Monomials}\label{sec: standard mon}

Let $T\subset V_G$ be a (possibly empty) set of vertices. A  $T$-join is a subset of edges $J\subset E_G$ such that 
$T$ is precisely the set of odd degree vertices of the subgraph of $G$ edge-induced by 
$J$. Since the number of odd degree vertices of a graph is even, the existence of a 
$T$-join implies that the intersection of $T$ with the vertex set of every 
connected component of the graph has even cardinality.

\begin{definition}
A subset $T\subset V_G$ is called an \emph{even subset of vertices} 
if $|T\cap V_H|$ is even, for every connected component $H\subset G$. 
We denote the set of all even subsets of vertices of a graph $G$ by $\PE(V_G)$.
\end{definition}

\begin{figure}[ht]
\begin{tikzpicture}

\coordinate (P1) at (-1,-.3);
\coordinate (P2) at (-.6,1.25);
\coordinate (P3) at (-2.25,.75);
\coordinate (P4) at (.5,-.5);
\coordinate (P5) at (-2,-.1);
\coordinate (P6) at (.25,.25);

\coordinate (P7) at (1.5,-.25);
\coordinate (P8) at (2.15,.7);
\coordinate (P9) at (3,1.2);
\coordinate (P10) at (4.8,.7);
\coordinate (P11) at (2.5,-.5);
\coordinate (P12) at (4.25,-.25);

\draw (P1)--(P2);
\draw (P1)--(P3);
\draw[line width=1pt] (P1)--(P5);
\draw[line width=1pt] (P1)--(P6);
\draw[line width=1pt] (P3)--(P2);
\draw[line width=1pt] (P6)--(P2);

\draw[line width=1pt] (P4)--(P7);

\draw (P9)--(P11);
\draw (P9)--(P12);
\draw[white,fill=white] (2.85,.7) circle (1.5pt);
\draw[white,fill=white] (2.77,.42) circle (1.5pt);
\draw[white,fill=white] (3.42,.7) circle (1.5pt);
\draw[line width=1pt] (P8)--(P12);
\draw[line width=1pt] (P8)--(P10);
\draw (P9)--(P10);
\draw[line width=1pt] (P8)--(P11);
\draw (P10)--(P12);

\foreach \i in {1,...,12} {\fill[color=black] (P\i) circle (2pt);}

\foreach \i in {1} {\draw  (P\i) node[anchor=north] {$\ss \i$};}
\foreach \i in {2,4,7,8,9} {\draw  (P\i) node[anchor=south] {$\ss \i$};}
\foreach \i in {3,5} {\draw  (P\i) node[anchor=east] {$\ss \i$};}
\foreach \i in {6,10,11,12} {\draw  (P\i) node[anchor=west] {$\ss \i$};}

\end{tikzpicture}
\caption{$T$-joins.} 
\label{fig: T-joins}
\end{figure}

When $T$ consists of two vertices in the same connected component, a $T$-join is an 
edge-disjoint union of a path between the two vertices together with cycles. 
A minimal cardinality $T$-join is then a shortest path between the two vertices. In 
the graph of Figure~\ref{fig: T-joins}, the path in bold on the left is a $\set{3,5}$-join,
but not a minimal cardinality one. For a non-empty $T\in \PE(V_G)$, a $T$-join may constructed 
by choosing paths between pairs of vertices of $T$ in a same connected component and taking 
their symmetric difference. If $T=\emptyset$, then the empty set (of edges) is a $T$-join. 
Any non-empty $T$-join, in this case, is a subset of edges defining a subgraph of $G$ 
with vertices of even degree or, according to Definition~\ref{def: Eulerian subgraph}, 
an Eulerian subgraph of $G$.
We see that a $T$-join always exists. Whether, for a given $T\in \PE(V_G)$, 
a $T$-join with cardinality of a given parity exists 
is a different matter, which we will explore. We refer the reader to \cite[Chapter~12]{KoVy06} 
and \cite[Chapter~29]{Sch03} for further properties of  
$T$-joins.

\begin{definition}\label{def: the T set of a monomial}
Given $\t^\alpha \in K[E_G]$, recalling from Definition~\ref{def: the J operator}, that 
$\J(\t^\alpha)$ is the subset of edges $\set{h\in E_G : \alpha(h)\text{ is odd}}$, 
define $\theta(\t^\alpha)\in \PE(V_G)$ 
to be the set of odd degree vertices of the subgraph edge-induced by $\J(\t^\alpha)$. 
\end{definition}

Note that, by definition, $\J(\t^\alpha)$ is a $\theta(\t^\alpha)$-join. Additionally, 
if $\t^\alpha,\t^\beta\in K[E_G]$,
as $\J(\t^\alpha\t^\beta) = \J(\t^\alpha)\sd \J(\t^\beta)$, we deduce that
$\J(\t^\alpha \t^\beta)$ is $(\theta(\t^\alpha)\sd \theta(\t^\beta))$-join. This follows from 
an elementary property of $T$-joins (\emph{cf}.~\cite[Proposition~12.6]{KoVy06}). This implies 
that $\theta(\t^\alpha\t^\beta)= \theta(\t^\alpha)\sd \theta(\t^\beta)$. In particular,
$\J(\t^\alpha)\sd \J(\t^\beta)$ is an Eulerian subgraph if and only if
$$\theta(\t^\alpha)\sd \theta(\t^\beta) = \emptyset \iff \theta(\t^\alpha)= \theta(\t^\beta).$$
We will use this property in the sequel.

\smallskip

As we shall see, $T$-joins play an important role in the characterization of monomials.
We start by showing that they can be used to characterize the process of division of monomials 
by the Gr\"obner basis that was introduced in Definition~\ref{def: Groebner basis}.

\begin{theorem}\label{thm: remainder of a monomial by the groebner basis} 
Assume $|E_G|>1$, fix a grevlex order on $K[E_G]$ and let $\G$ be the associated Gr\"obner basis of $I(X_G)$. 
Let $\t^\delta,\t^\gamma \in K[E_G]$.
\begin{enumerate}
\item If the monomial $\t^\delta$ is the remainder of the division of $\t^\gamma$ by an element of 
$\G$ then \mbox{$\theta(\t^\delta)=\theta(\t^\gamma)$} and $|\J(\t^\delta)|\equiv_2 |\J(\t^\gamma)|$.
\item The remainder in a standard expression of 
$\t^\gamma$ with respect to $\G$ is a monomial, $\t^\delta$, with
$|\J(\t^\delta)| = \min\set{|J| : J\text{ is a $\theta(\t^\gamma)$-join and $|J|\equiv_2|\J(\t^\gamma)|$}}$.
\end{enumerate}
\end{theorem}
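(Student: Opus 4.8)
The plan is to prove part (i) first and then deduce part (ii) from it, using the Gröbner basis property together with the dimension-one Cohen--Macaulay structure of $K[E_G]/I(X_G)$. For part (i), I would analyze a single division step. Suppose $\t^\delta$ is obtained from $\t^\gamma$ by cancelling the leading term of some $g\in\G$: writing $g = \init_\prec(g) - m$ with $m$ the trailing monomial, we have $\t^\gamma = \t^\sigma\cdot\init_\prec(g)$ for some monomial $\t^\sigma$, and $\t^\delta = \t^\sigma\cdot m$. Since $g\in\G\subset I(X_G)$ and $g=\init_\prec(g)-m$ is a homogeneous binomial, $\t^\gamma-\t^\delta = \t^\sigma\cdot g\in I(X_G)$. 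By (iv) of Proposition~\ref{prop: key properties} (in the form recalled just after Definition~\ref{def: the J operator}), this means $\J(\t^\gamma)\sd\J(\t^\delta)$ is an even Eulerian subgraph; by the discussion following Definition~\ref{def: the T set of a monomial} this is equivalent to $\theta(\t^\gamma)=\theta(\t^\delta)$, and the even cardinality of $\J(\t^\gamma)\sd\J(\t^\delta)$ gives $|\J(\t^\gamma)|\equiv_2|\J(\t^\delta)|$. Since the division algorithm produces $\t^\delta$ from $\t^\gamma$ by a finite chain of such steps (each intermediate term being a monomial, as every $g\in\G$ is a binomial), both invariants $\theta(-)$ and $|\J(-)|\bmod 2$ are preserved along the chain, proving (i).

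For part (ii), let $\t^\delta$ be the remainder in a standard expression of $\t^\gamma$ with respect to $\G$; this is a standard monomial, i.e.\ $\init_\prec(\t^\delta)$ is not divisible by any leading term of an element of $\G$. By (i), $\t^\delta$ is a $\theta(\t^\gamma)$-join (recall $\J(\t^\delta)$ is a $\theta(\t^\delta)$-join by definition, and $\theta(\t^\delta)=\theta(\t^\gamma)$) with $|\J(\t^\delta)|\equiv_2|\J(\t^\gamma)|$; so $|\J(\t^\delta)|$ is at least the minimum in the statement. For the reverse inequality, I would argue by contradiction. First observe that a standard monomial must be square-free: if $t_h^2\mid\t^\delta$ then $t_h^2=\init_\prec(t_h^2-t_\ell^2)$ divides $\t^\delta$ for any other edge $\ell$, contradicting standardness (here $|E_G|>1$ is used). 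So $\J(\t^\delta)$ determines $\t^\delta$, and $\t^\delta$ is literally a $\theta(\t^\gamma)$-join viewed as an edge set. Now suppose $J$ is a $\theta(\t^\gamma)$-join with $|J|\equiv_2|\J(\t^\gamma)|$ and $|J|<|\J(\t^\delta)|$. Let $\t^\eta$ be the square-free monomial with $\J(\t^\eta)=J$. Then $\theta(\t^\eta)=\theta(\t^\gamma)=\theta(\t^\delta)$ and $\deg\t^\eta=|J|\not\equiv\deg\t^\delta=|\J(\t^\delta)|$ only if the parities disagree --- but they agree modulo $2$, and moreover $\deg\t^\eta<\deg\t^\delta$; I need $\t^\delta-\t^\eta$, or a related binomial, to lie in $I(X_G)$ and to have $\t^\delta$ (or a divisor of it) as leading term, contradicting standardness.

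The technical heart, and the main obstacle, is bridging the degree gap: $\t^\delta$ and $\t^\eta$ need not have the same degree, so $\t^\delta-\t^\eta$ is not homogeneous and not directly in $I(X_G)$. The fix is to use Lemma~\ref{lem: two binomials nearly Eulerian}: after removing $\gcd(\t^\delta,\t^\eta)$, we get relatively prime square-free monomials $\t^{\delta'},\t^{\eta'}$ with $\J(\t^{\delta'})\cup\J(\t^{\eta'})$ an even Eulerian subgraph (since $\theta(\t^{\delta'})=\theta(\t^{\eta'})$) and $\deg\t^{\delta'}>\deg\t^{\eta'}$ with the difference even (both parities equal $|\J(\t^\gamma)|\bmod 2$, and removing the common factor preserves the parity of each degree --- here I must be careful that $\deg\t^{\delta'}-\deg\t^{\eta'}=\deg\t^\delta-\deg\t^\eta$ is even, which holds since both are $\equiv|\J(\t^\gamma)|\pmod 2$). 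Lemma~\ref{lem: two binomials nearly Eulerian} then yields $\t^\xi\mid\t^{\delta'}$ with $\init_\prec(\t^{\delta'-\xi}-\t^{\eta'+\xi})=\t^{\delta'-\xi}\in\E$, so $\t^{\delta'-\xi}$, hence $\t^\delta$, is divisible by a leading term of an element of $\E\subset\G$ --- contradicting that $\t^\delta$ is standard. Therefore no such shorter $J$ exists, and $|\J(\t^\delta)|$ equals the claimed minimum. One subtlety to double-check is the edge case $\theta(\t^\gamma)=\emptyset$ with $\J(\t^\gamma)$ of even cardinality, where the minimum is $0$ and $\t^\delta=1$; this is handled identically since then $\t^{\delta'}=\t^\delta$ and $\t^{\eta'}=1$ forces $\J(\t^{\delta'})$ itself to be a nonempty even Eulerian subgraph, again giving a leading term dividing $\t^\delta$.
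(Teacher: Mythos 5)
Your part (i) is correct and is in fact a cleaner, more uniform argument than the paper's: the paper splits into the cases $g\in\T$ and $g\in\E$ and computes $\J$ and $\theta$ explicitly in each, whereas you observe once that a single division step gives $\t^\gamma-\t^\delta=\t^\sigma g\in I(X_G)$, a homogeneous binomial, and then invoke (iv) of Proposition~\ref{prop: key properties} together with the identity $|\J(\t^\gamma)\sd\J(\t^\delta)|\equiv_2|\J(\t^\gamma)|+|\J(\t^\delta)|$.

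Part (ii), however, has a genuine gap: the claim that the remainder $\t^\delta$ is square-free is false. Your justification --- that $t_h^2=\init_\prec(t_h^2-t_\ell^2)$ for any other edge $\ell$ --- fails precisely when $h$ is the \emph{last} edge $e$ of the grevlex order: there $\init_\prec(t_\ell^2-t_e^2)=t_\ell^2$, so no power of $t_e$ is divisible by a leading term of $\T$ (nor of $\E$, whose leading terms are square-free and involve at least two variables). Indeed $K[E_G]/I(X_G)$ is one-dimensional, so it has standard monomials in every degree and they cannot all be square-free; concretely, for the path with edges $h,e$ one has $\G=\set{t_h^2-t_e^2}$ and $t_h^3$ reduces to the standard monomial $t_ht_e^2$. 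This matters downstream: you use square-freeness both to identify $\t^\delta$ with the edge set $\J(\t^\delta)$ and, crucially, to apply Lemma~\ref{lem: two binomials nearly Eulerian}, whose hypotheses require square-free monomials; if $\t^{\delta'}$ carries an even power of $t_e$, then $\J(\t^{\delta'})\cup\J(\t^{\eta'})$ no longer records $\t^{\delta'}$ and the lemma need not produce a leading term dividing $\t^\delta$ (for instance $\t^{\delta'}=t_e^2$, $\t^{\eta'}=1$ gives $\J(\t^{\delta'})\cup\J(\t^{\eta'})=\emptyset$ and no contradiction). The repair is exactly the paper's move: a standard monomial has the form $\t^\mu t_e^{2k}$ with $\t^\mu$ square-free, so $\J(\t^\delta)=\J(\t^\mu)$ and $\deg\t^\mu=|\J(\t^\delta)|$, and one runs your gcd--plus--Lemma~\ref{lem: two binomials nearly Eulerian} contradiction with $\t^\mu$ in place of $\t^\delta$ (a leading term dividing $\t^\mu$ still divides $\t^\delta$). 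With that correction your scheme --- contradicting standardness against an arbitrary shorter join of the correct parity --- does go through, and is only a mild variant of the paper's argument, which instead reduces the monomial of a minimum-cardinality join to a square-free standard monomial $\t^\nu$ and proves $\t^\mu=\t^\nu$.
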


\begin{proof}
Let $e\in E_G$ be the last edge. As $\G=\T\cup \E$, there are two cases in the proof of (i).
If $\t^\gamma$ is divisible by $t^2_h$, for some $h\not = e$, 
the division of $\t^\gamma$ by the element $t_h^2-t_\ell^2$ yields remainder 
$\t^\delta = \t^\gamma t_h^{-2}t_\ell^2$. In this case, $\J(\t^\delta)=\J(\t^\gamma)$ and (i) follows trivially. 
Suppose now that $\t^\gamma$ is divisible by $\t^\alpha$ where $\t^\alpha-\t^\beta$ is an Eulerian 
binomial. Let 
$\t^\gamma=\t^\rho \t^\alpha$, for some $\t^\rho \in K[E_G]$. Then, 
division yields remainder $\t^\delta =  \t^\rho \t^\beta$.
Let $T_1=\theta (\t^\alpha)$, $T_2=\theta(\t^\beta)$ and $T_3=\theta(\t^\rho)$. 
Since $\t^\alpha-\t^\beta$ is Eulerian, $T_1=T_2$. From 
$$
\renewcommand{\arraystretch}{1.4}
\begin{array}{l}
\J(\t^\gamma) = \J(\t^\rho)\sd \J(\t^\alpha)\equiv_2 |\J(\t^\rho)|+|\J(\t^\alpha)|\\
\theta(\t^\gamma) = \theta(\t^\rho)\sd \theta(\t^\alpha) = T_3\sd T_1 = T_3\sd T_2,
\end{array}
$$
we deduce that $\J(\t^\delta)=\J(\t^\rho)\sd \J(\t^\beta)$ is a $\theta(\t^\gamma)$-join and, 
moreover,
$$
\renewcommand{\arraystretch}{1.3}
\begin{array}{l}
|\J(\t^\delta)| = |\J(\t^\rho)| + |\J(\t^\beta)| - 2 |\J(\t^\rho)\cap \J(\t^\beta)| \\
\phantom{|\J(\t^\delta)|} \equiv_2 |\J(\t^\rho)| + |\J(\t^\alpha)| \equiv_2 |\J(\t^\gamma)|.
\end{array}
$$

To prove (ii) we start by remarking that, since $\G$ consists of binomials, the remainder term 
in a standard expression of a monomial with respect to $\G$ is also a monomial. 
Let us denote 
$$
\Gamma=\set{J : J\text{ is a $\theta(\t^\gamma)$-join and $|J|\equiv_2|\J(\t^\gamma)|$}}.
$$
Since $\Gamma$ is non-empty ($\J(\t^\gamma)$ belongs to it) we may consider $\t^\rho$, 
the product of the edges of a minimum cardinality element of $\Gamma$. 
Let $\t^\nu$ be the remainder in a standard expression of $\t^\rho$ 
with respect to $\G$. By (i), $J(\t^\nu)\in \Gamma$.
Since 
$$
|\J(\t^\nu)|\leq \deg(\t^\nu) = \deg(\t^\rho),
$$
by the minimality, we deduce that $\t^\nu$ is square-free.
On the other hand, let $\t^\mu$ be a square-free monomial such that 
$\t^\delta = \t^\mu t_e^{2k}$, for some $k\geq 0$. 
Then $\J(\t^\delta) = \J(\t^\mu)$. Using (i), 
$$
\theta(\t^\gamma)=\theta(\t^\delta)=\theta(\t^\mu).
$$
It suffices to prove that $\t^\mu = \t^\nu$. Arguing by contradiction,  
assume that $\t^\mu \not =\t^\nu$ and let $\t^\alpha = \t^\mu \gcd(\t^\mu,\t^\nu)^{-1}$ 
and $\t^\beta = \t^\nu \gcd(\t^\mu,\t^\nu)^{-1}$. Then, $\t^\alpha$ and $\t^\beta$ are distinct, 
relatively prime, square-free monomials, satisfying $\theta(\t^\alpha) = \theta(\t^\beta)$. 
If they have equal degree then $\t^\alpha-\t^\beta$ is Eulerian and hence 
one of $\t^\mu$ or $\t^\nu$ is divisible by a leading term of $\G$, which is absurd. 
If $\deg(\t^\alpha)\not = \deg(\t^\beta)$ then, Lemma~\ref{lem: two binomials nearly Eulerian}
yields the same conclusion. 
\end{proof}

Minimum $T$-joins, i.e., $T$-joins with minimum cardinality are of special 
interest in questions of Combinatorial Optimization. Here, Theorem~\ref{thm: remainder of a monomial by the groebner basis}
is leading us to a refinement of this notion, namely, minimum cardinality $T$-joins among 
$T$-joins of a fixed parity cardinality. As we show below, if $G$ is bipartite the parity of the cardinality 
of $T$-joins (for a fixed $T$-join) does not change. However if $G$ is non-bipartite, the sets of $T$-joins of even cardinality and 
and of odd cardinality are both non-empty.

\begin{definition}\label{def: T joins of fixed parity}
Let $T\in\PE(V_G)$, $J$ a $T$-join and set $i=|J|+2\ZZ \in \ZZ/2$. 
We will say that $J$ is a $T$-join of parity $i$. Let us denote by $\J_i(G,T)$ the 
set of all $T$-joins of parity $i$ and let  
$\tau_i(G,T)$ denote the minimum cardinality of an element of $\J_i(G,T)$. 
\end{definition}

To ease notation, we will denote the elements of $\ZZ/2$ by $0$ and $1$. 
Naturally, \emph{parity zero} and \emph{even} will be used as synonyms, and the same 
applies to \emph{parity one} and \emph{odd}. Note that $\tau_i(G,T)$  
is defined only if $\J_i(G,T)$ is non-empty. We will refer to the minimum cardinality elements of  
$\J_i(G,T)$ as \emph{minimum fixed parity} $T$-joins. Minimum fixed parity $T$-joins also appear 
in Combinatorial Optimization; they are solutions of the a so-called \emph{Parity Join Problem} (\emph{cf}.~\cite{GeeKa18}).

\begin{example} 
Let $G$ be the graph in Figure~\ref{fig: T-joins}. The path depicted in bold, on the left, 
is an even $\set{3,5}$-join. 
It is not a minimum even $\set{3,5}$-join as $3$ and $5$ are 
joined by a path of length two. We deduce that $\tau_0(G,\set{3,5})=2$. 
Other examples of computations of these numbers are: 
\mbox{$\tau_1(G,\set{3,5})=3$}, \mbox{$\tau_0(G,\set{4,7})=4$} 
(take the edge between $4$ and $7$ and any triangle), $\tau_1(G,\set{4,7})=1$, 
$\tau_0(G,\set{8,10,11,12})=2$ and $\tau_1(G,\set{8,10,11,12})=3$. 
\end{example}

For a fixed $T\in \PE(V_G)$, the minimum cardinality of $T$-joins 
(without the parity constraint) is denoted in the literature by $\tau(G,T)$. 
If $G$ admits $T$-joins of both parities
then, of course, $\tau(G,T) = \min\set{\tau_0(G,T),\tau_1(G,T)}$.

\begin{lemma}\label{lem: when do odd and even T-joins exist}
Let $T\in \PE(V_G)$. Then, $G$ is non-bipartite if and only if $\J_0(G,T)$ and $\J_1(G,T)$ are both non-empty.
\end{lemma}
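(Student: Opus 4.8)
The plan is to prove both implications by contraposition, exploiting the following elementary fact: if $J$ is a $T$-join and $C$ is any Eulerian subgraph (cycle space element) of $G$, then $J \sd C$ is again a $T$-join, with $|J \sd C| \equiv_2 |J| + |C|$; conversely, the symmetric difference of any two $T$-joins is an Eulerian subgraph. Thus, for a fixed $T \in \PE(V_G)$, the set of parities realized by $T$-joins is a coset of the subgroup $\set{|C| + 2\ZZ : C \text{ Eulerian subgraph of } G} \leq \ZZ/2$. Both parities occur if and only if this subgroup is all of $\ZZ/2$, i.e., if and only if $G$ has an Eulerian subgraph with an odd number of edges — equivalently, an odd cycle — equivalently, $G$ is non-bipartite. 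So the whole lemma reduces to this reformulation of non-bipartiteness, applied to the coset structure just described.

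First I would fix $T \in \PE(V_G)$ and invoke the discussion preceding Definition~\ref{def: the T set of a monomial}: a $T$-join exists (choose paths between pairs of vertices of $T$ within each connected component and take their symmetric difference), so $\J_0(G,T) \cup \J_1(G,T) \neq \emptyset$; fix one such $T$-join $J_0$ of parity $i_0$. Next I would record that for any $T$-join $J$, the set $J \sd J_0$ is an Eulerian subgraph of $G$ (its vertices have even degree, since the odd-degree vertices of $J$ and of $J_0$ coincide, both being $T$), and conversely, for any Eulerian subgraph $C$, the set $J_0 \sd C$ is a $T$-join of parity $i_0 + (|C| + 2\ZZ)$. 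Hence $\set{i : \J_i(G,T) \neq \emptyset} = i_0 + H$ where $H = \set{|C| + 2\ZZ : C \text{ Eulerian subgraph of } G}$, and this equals all of $\ZZ/2$ iff $H = \ZZ/2$ iff $G$ contains an Eulerian subgraph of odd edge-cardinality.

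For the final link I would argue: if $G$ is bipartite, every cycle has even length, every closed walk has even length, and hence every Eulerian subgraph has even edge-cardinality (decompose it into edge-disjoint cycles, or simply note each connected component of an Eulerian subgraph is itself Eulerian and thus a union of cycles of even length); so $H = \set{0}$ and exactly one parity is realized. Conversely, if $G$ is non-bipartite it contains an odd cycle, which is an Eulerian subgraph of odd edge-cardinality, so $H = \ZZ/2$ and both parities are realized. Combining with the coset description completes the proof.

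I do not expect a serious obstacle here; the only point requiring a little care is the claim that a bipartite graph has no odd Eulerian subgraph. The cleanest route is to observe that any Eulerian subgraph, being a subgraph with all even degrees, decomposes into an edge-disjoint union of cycles (a standard fact), and in a bipartite graph every cycle has even length, so the total edge count is even; this avoids any appeal to connectivity of the Eulerian subgraph, which the paper explicitly warns against assuming.
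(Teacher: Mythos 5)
Your proposal is correct and follows essentially the same route as the paper: both directions rest on the facts that the symmetric difference of two $T$-joins of opposite parity is an odd Eulerian subgraph, which decomposes into edge-disjoint cycles and hence contains an odd cycle, and that, conversely, taking the symmetric difference of a $T$-join with an odd cycle flips the parity. The coset-of-a-subgroup packaging is a tidy reformulation but adds no new mathematical content beyond the paper's argument.
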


\begin{proof}
If there exist $T$-joins, $J_1$ and $J_2$, such that $|J_1|\not \equiv_2 |J_2|$, 
then $J_1\sd J_2$ is non-empty and 
defines an Eulerian subgraph $C\subset G$ with 
$$
|E_C|=|J_1\sd J_2| \equiv_2 |J_1|+|J_2| \equiv_2 1. 
$$
As $C$ decomposes into an edge-disjoint union of cycles, one of these cycles must be odd and thus  
$G$ is non-bipartite. 
Conversely if $G$ is non-bipartite and $C\subset G$ is an odd cycle then, 
for a $T$-join, $J\subset E_G$, the subset $J\sd E_C$ 
is a $T$-join with $|J\sd E_C|\not \equiv_2 |J|$.
\end{proof}

\begin{example}
Let $G=\K_n$ be a complete graph on $n\geq 3$ vertices. 
Table~\ref{tbl: minimum and odd T-joins in Kn} contains a 
list of minimum cardinality elements of $\J_0(G,T)$ and $\J_1(G,T)$, for $T$ of varying cardinality. 
\begin{table}[ht]
\renewcommand{\arraystretch}{1.3}
\begin{tabular}{c|c|c}
$|T|$ & parity $0$ & parity $1$\\
\hline 
$0$ & $\emptyset$ & \begin{tikzpicture}[scale=1.25]
\draw [color=black] (0,0)-- (.25,0);
\draw [color=black] (.5,0)-- (.25,0);
\draw [color=black] (0,0)..controls (0.1,.15) and (.4,.15).. (.5,0);
\fill [color=gray!45!white] (0,0) circle (1.25pt);
\fill [color=gray!45!white] (.25,0) circle (1.25pt);
\fill [color=gray!45!white] (.5,0) circle (1.25pt);
\end{tikzpicture}\\
$2$ & \begin{tikzpicture}[scale=1.25]
\draw [color=black] (0,0)-- (.25,0);
\draw [color=black] (.5,0)-- (.25,0);
\fill [color=black] (0,0) circle (1.25pt);
\fill [color=gray!45!white] (.25,0) circle (1.25pt);
\fill [color=black] (.5,0) circle (1.25pt);
\end{tikzpicture} & \begin{tikzpicture}[scale=1.25]
\draw [color=black] (0,0)-- (.25,0);
\fill [color=black] (0,0) circle (1.25pt);
\fill [color=black] (.25,0) circle (1.25pt);
\end{tikzpicture}\\
$4$ & \begin{tikzpicture}[scale=1.25]
\draw [color=black] (0,0)-- (.25,0);
\draw [color=black] (0.5,0)-- (.75,0);
\fill [color=black] (0,0) circle (1.25pt);
\fill [color=black] (.25,0) circle (1.25pt);
\fill [color=black] (0.5,0) circle (1.25pt);
\fill [color=black] (.75,0) circle (1.25pt);
\end{tikzpicture} & 
\begin{tikzpicture}[scale=1.25]
\draw [color=black] (0.5,0)-- (.75,0);
\draw [color=black] (0,0)..controls (0.1,.2) and (.65,.2).. (.75,0);
\draw [color=black] (.25,0)..controls (0.35,.1) and (.65,.1).. (.75,0);
\fill [color=black] (0,0) circle (1.25pt);
\fill [color=black] (.25,0) circle (1.25pt);
\fill [color=black] (0.5,0) circle (1.25pt);
\fill [color=black] (.75,0) circle (1.25pt);
\end{tikzpicture}\\
$6$ & \begin{tikzpicture}[scale=1.25]
\draw [color=black] (-0.5,0)-- (-.25,0);
\draw [color=black] (0.5,0)-- (.75,0);
\draw [color=black] (0,0)..controls (0.1,.2) and (.65,.2).. (.75,0);
\draw [color=black] (.25,0)..controls (0.35,.1) and (.65,.1).. (.75,0);
\fill [color=black] (0,0) circle (1.25pt);
\fill [color=black] (.25,0) circle (1.25pt);
\fill [color=black] (0.5,0) circle (1.25pt);
\fill [color=black] (.75,0) circle (1.25pt);
\fill [color=black] (-.5,0) circle (1.25pt);
\fill [color=black] (-.25,0) circle (1.25pt);
\end{tikzpicture} & \begin{tikzpicture}[scale=1.25]
\draw [color=black] (0,0)-- (.25,0);
\draw [color=black] (0.5,0)-- (.75,0);
\draw [color=black] (1,0)-- (1.25,0);
\fill [color=black] (0,0) circle (1.25pt);
\fill [color=black] (.25,0) circle (1.25pt);
\fill [color=black] (0.5,0) circle (1.25pt);
\fill [color=black] (.75,0) circle (1.25pt);
\fill [color=black] (1,0) circle (1.25pt);
\fill [color=black] (1.25,0) circle (1.25pt);
\end{tikzpicture}\\
$8$ & \begin{tikzpicture}[scale=1.25]
\draw [color=black] (0,0)-- (.25,0);
\draw [color=black] (0.5,0)-- (.75,0);
\draw [color=black] (1,0)-- (1.25,0);
\draw [color=black] (1.5,0)-- (1.75,0);
\fill [color=black] (0,0) circle (1.25pt);
\fill [color=black] (.25,0) circle (1.25pt);
\fill [color=black] (0.5,0) circle (1.25pt);
\fill [color=black] (.75,0) circle (1.25pt);
\fill [color=black] (1,0) circle (1.25pt);
\fill [color=black] (1.25,0) circle (1.25pt);
\fill [color=black] (1.5,0) circle (1.25pt);
\fill [color=black] (1.75,0) circle (1.25pt);
\end{tikzpicture} &
\begin{tikzpicture}[scale=1.25]
\draw [color=black] (-1,0)-- (-.75,0);
\draw [color=black] (-0.5,0)-- (-.25,0);
\draw [color=black] (0.5,0)-- (.75,0);
\draw [color=black] (0,0)..controls (0.1,.2) and (.65,.2).. (.75,0);
\draw [color=black] (.25,0)..controls (0.35,.1) and (.65,.1).. (.75,0);
\fill [color=black] (0,0) circle (1.25pt);
\fill [color=black] (.25,0) circle (1.25pt);
\fill [color=black] (0.5,0) circle (1.25pt);
\fill [color=black] (.75,0) circle (1.25pt);
\fill [color=black] (-.5,0) circle (1.25pt);
\fill [color=black] (-.25,0) circle (1.25pt);
\fill [color=black] (-1,0) circle (1.25pt);
\fill [color=black] (-.75,0) circle (1.25pt);
\end{tikzpicture}\\
\begin{tikzpicture}[scale=1]
\fill [color=white] (0,0.09) circle (.05pt);
\fill [color=black] (0,0.1) circle (.5pt);
\fill [color=black] (0,0.2) circle (.5pt);
\fill [color=black] (0,0.3) circle (.5pt);
\end{tikzpicture} &
\begin{tikzpicture}[scale=1]
\fill [color=black] (0,0.1) circle (.5pt);
\fill [color=black] (0,0.2) circle (.5pt);
\fill [color=black] (0,0.3) circle (.5pt);
\end{tikzpicture} &
 \begin{tikzpicture}[scale=1]
\fill [color=black] (0,0.1) circle (.5pt);
\fill [color=black] (0,0.2) circle (.5pt);
\fill [color=black] (0,0.3) circle (.5pt);
\end{tikzpicture}
\end{tabular}
\bigskip 

\caption{Minimum even and odd $T$-joins in $\K_n$} 
\label{tbl: minimum and odd T-joins in Kn}
\end{table}
In this table the vertices in $T$ are represented in black, while for other vertices 
(in the cases of $|T|=0$ and $1$) we use gray. 
Since a $T$-join has at least $|T|/2$ edges it is easy to check 
that the $T$-joins listed in the two columns of Table~\ref{tbl: minimum and odd T-joins in Kn} 
are minimum cardinality elements of $\J_0(G,T)$ and $\J_1(G,T)$, 
respectively.
\end{example}

\begin{definition}
Let $I$ be an ideal in a polynomial ring and $\prec$ a monomial order.
A monomial which does not belong to $\init_\prec (I)$ is called a standard monomial of $I$ 
with respect to $\prec$. We denote the set of standard monomials by $\B_\prec(I)$. 
\end{definition}

The set of standard monomials of $I$ with respect to a monomial 
order is a basis for the quotient of the polynomial ring by the ideal, as a vector space over the field 
(\emph{cf}.~Macaulay's Theorem, \cite[\S2.2.2]{EnHe12}). Let $e\in E_G$ be a choice of an edge and $\prec$ 
a grevlex order with $t_e$ last. In the final part of this work we will use the standard basis 
of the zero dimensional ideal $(I(X_G),t_e)$ to compute the degree and the regularity of $I(X_G)$. 
Because $(I(X_G),t_e)$ zero dimensional, $\B_\prec(I(X_G),t_e)$ is a finite set and, as we show next, 
can be described in a combinatorial way, using $T$-joins of fixed parity. 
\smallskip

For the sake of clarity of notation, recall that $E_G$ is regarded as set of (unordered) pairs 
of vertices and therefore if $e=\set{a,b}$ is an edge and $T\in \PE(V_G)$ is an even subset of vertices, we may refer 
to $T\sd e =  T\sd\set{a,b}\in \PE(V_G)$.

\begin{theorem}\label{thm: standard monomials and T-sets}
Let $e\in E_G$ and $\prec$ a grevlex order on $K[E_G]$ with $t_e$ last. 
Let $\vartheta \colon \B_\prec(I(X_G),t_e) \to \PE(V_G)\times (\ZZ/2)$ be given by 
\mbox{$\t^\alpha \mapsto (\theta(\t^\alpha), \deg(\t^\alpha) + 2\ZZ)$.} 
Then $\vartheta$ is injective and 
\begin{equation}\label{eq: characterizing the image set}
\operatorname{Im} \vartheta = \set{(T,i) : \J_i(G,T)\not = \emptyset \text{ and } \tau_{i+1}(G,T\sd e) = \tau_i(G,T) + 1}.
\end{equation}
\end{theorem}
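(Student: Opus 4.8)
The plan is to first understand the map $\vartheta$ via the connection between standard monomials and remainders. A monomial $\t^\alpha$ is standard for $(I(X_G),t_e)$ precisely when it is not divisible by any leading term of $\G$ and not divisible by $t_e$; equivalently, $\t^\alpha$ is the remainder in a standard expression of itself with respect to $\G$ and $t_e \nmid \t^\alpha$. In particular, every standard monomial is square-free in $t_e$ (indeed $t_e$-free), so by Theorem~\ref{thm: remainder of a monomial by the groebner basis}(ii) its exponent vector realizes the minimum of $|J|$ over $J \in \J_i(G,\theta(\t^\alpha))$ where $i = \deg(\t^\alpha)+2\ZZ$; that is, $\deg(\t^\alpha) = \tau_i(G,\theta(\t^\alpha))$, the monomial $\t^\alpha$ is itself square-free, and $\J(\t^\alpha)$ is a minimum parity-$i$ $\theta(\t^\alpha)$-join.

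For injectivity, suppose $\t^\alpha \ne \t^\beta$ are standard monomials with $\theta(\t^\alpha)=\theta(\t^\beta)=:T$ and $\deg(\t^\alpha)=\deg(\t^\beta)$ (equal parity forces equal degree by the minimality just observed). Then $\t^\alpha,\t^\beta$ are square-free of the same degree with $\theta(\t^\alpha)=\theta(\t^\beta)$; dividing out $\gcd(\t^\alpha,\t^\beta)$ produces a relatively prime square-free pair with equal $\theta$, hence an Eulerian binomial in $\E$, so one of $\t^\alpha,\t^\beta$ is divisible by a leading term of $\G$ — contradiction. This is exactly the argument already used at the end of the proof of Theorem~\ref{thm: remainder of a monomial by the groebner basis}, so I would simply cite it. Thus $\vartheta$ is injective, and its image consists of pairs $(T,i)$ such that $\J_i(G,T)\ne\emptyset$ and $\tau_i(G,T)$ is attained by a $T$-join $J$ with $\t^{J} := \prod_{h\in J} t_h$ not divisible by $t_e$ and not divisible by any $\init_\prec$ of an element of $\G$.

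The substance of the proof is identifying this image with the right-hand side of \eqref{eq: characterizing the image set}. For one inclusion: let $(T,i)\in\operatorname{Im}\vartheta$, witnessed by a standard $\t^\alpha$ with $\J(\t^\alpha)=J$ a minimum parity-$i$ $T$-join not using $e$. Since $t_e \nmid \t^\alpha$, the edge $e\notin J$, so $J$ is also a $T\sd e$-join? No — one must track carefully: $J$ is a $(T\sd e)$-join only after adding $e$. The key computation is: $J\cup\{e\}$ (note $e\notin J$) is a $(T\sd e)$-join of parity $i+1$ and cardinality $\tau_i(G,T)+1$, giving $\tau_{i+1}(G,T\sd e)\le \tau_i(G,T)+1$; the reverse inequality $\tau_{i+1}(G,T\sd e)\ge \tau_i(G,T)+1$ is what standardness buys us — if some $(T\sd e)$-join $J'$ of parity $i+1$ had $|J'|\le \tau_i(G,T)$, then $J'\sd e$ would be a parity-$i$ $T$-join, and one argues (using that $e$ is last in $\prec$, so $t_e$ divides the leading term of the relevant Eulerian binomial built from $J$ and $J'\sd e$) that $\t^{J}$ would be divisible by $\init_\prec$ of an element of $\E$ unless $|J'\sd e|\ge |J| = \tau_i(G,T)$, forcing $|J'|\ge\tau_i(G,T)+1$. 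For the other inclusion, given $(T,i)$ on the right-hand side, take a minimum parity-$i$ $T$-join $J$, let $\t^\alpha=\prod_{h\in J}t_h$, divide by $\G$ to get the standard remainder $\t^\delta$ with $\theta(\t^\delta)=T$, $\deg(\t^\delta)=\tau_i(G,T)$, $\t^\delta$ square-free; the condition $\tau_{i+1}(G,T\sd e)=\tau_i(G,T)+1$ is precisely what prevents $e\in J(\t^\delta)$ (if $e$ appeared, removing it would give a shorter parity-$i$ $T$-join or contradict the equation), so $\t^\delta$ is $t_e$-free and hence a standard monomial of $(I(X_G),t_e)$ with $\vartheta(\t^\delta)=(T,i)$.

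The main obstacle — and where I would spend the most care — is the interplay between the grevlex order with $t_e$ last and the $T$-join combinatorics in the two inclusions above: specifically, showing that a $(T\sd e)$-join of parity $i+1$ with too-small cardinality forces divisibility by a leading term of $\G$ in $\t^{J}$, and conversely that the parity equation $\tau_{i+1}(G,T\sd e)=\tau_i(G,T)+1$ is exactly the obstruction to $e$ lying in the minimal remainder. Lemma~\ref{lem: two binomials nearly Eulerian} will be the workhorse here, handling the case where the two competing square-free monomials have unequal degree, and the fact that $e$ is $\prec$-last is what ensures $t_e$ shows up in the leading term at the right moment. I would organize the argument around the bijection "standard monomial $\leftrightarrow$ minimum parity-$i$ $(T$ not using $e)$-join", proving the two inclusions of \eqref{eq: characterizing the image set} separately, with the $t_e$-last property invoked explicitly at each delicate step.
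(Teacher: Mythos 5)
Your overall strategy coincides with the paper's: the injectivity argument and the reverse inclusion of \eqref{eq: characterizing the image set} are exactly as in the text (square-freeness of standard monomials, Theorem~\ref{thm: remainder of a monomial by the groebner basis}(ii) to identify degrees with $\tau_i(G,T)$, and the Eulerian-binomial contradiction). Where you genuinely diverge is the forward inclusion. The paper's trick is to observe that if $\t^\alpha\in\B_\prec(I(X_G),t_e)$ then $\t^\alpha t_e$ is still a square-free standard monomial of $I(X_G)$ itself (no leading term of $\T$ divides a square-free monomial, and no leading term of $\E$ is divisible by $t_e$ because $t_e$ is last), so applying Theorem~\ref{thm: remainder of a monomial by the groebner basis}(ii) to $\t^\alpha t_e$ yields $\tau_{i+1}(G,T\sd e)=\deg(\t^\alpha t_e)=\tau_i(G,T)+1$ in one stroke. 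You instead prove the two inequalities separately; your argument for $\tau_{i+1}(G,T\sd e)\geq\tau_i(G,T)+1$ --- a too-small $(T\sd e)$-join $J'$ of parity $i+1$ forces a minimum parity-$i$ $T$-join containing $e$, which together with $\J(\t^\alpha)$ produces an Eulerian binomial whose leading term divides $\t^\alpha$ --- is correct and completable, just more laborious. One genuine slip to fix: your parenthetical claims that ``$t_e$ divides the leading term of the relevant Eulerian binomial.'' It is the opposite. For a grevlex order, the leading term of an Eulerian binomial is the monomial \emph{not} containing the last edge of the symmetric difference; since $t_e$ is globally last, whenever $e$ occurs in an Eulerian binomial the leading term is the $t_e$-free half. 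That reversal is precisely what makes your conclusion work (the $t_e$-free leading term can divide the $t_e$-free monomial $\t^{J}$) and what underlies the paper's shortcut ($\t^\alpha t_e$ stays standard for $I(X_G)$). With that corrected, your proof goes through.
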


\begin{proof} Consider the Gr\"obner basis of $(I(X_G),t_e)$ given by $\G\cup \set{t_e}$, 
where $\G$ is the Gr\"obner basis of $I(X_G)$ given in Definition~\ref{def: Groebner basis}.
Let $\t^\alpha$ and $\t^\beta$, belonging to $\B_\prec(I(X_G),t_e)$, be such that 
$\vartheta(\t^\alpha)=\vartheta(\t^\beta)$. Denote 
$$
\renewcommand{\arraystretch}{1.3}
\begin{array}{l}
T=\theta(\t^\alpha)=\theta(\t^\beta), \\ 
i=\deg(\t^\alpha)+2\ZZ = \deg(\t^\beta)+2\ZZ.
\end{array}
$$
Since $\t^\alpha$ and $\t^\beta$ are standard elements 
of $I(X_G)$ with respect to $\prec$ that, additionally, are not divisible by $t_e$, 
both $\t^\alpha$ and $\t^\beta$ are square-free. Moreover by (ii) of 
Theorem~\ref{thm: remainder of a monomial by the groebner basis}, 
$\J(\t^\alpha)$ and $\J(\t^\beta)$ are minimum cardinality elements of $\J_i(G,T)$.
We conclude that 
$$
\deg(\t^\alpha) = |\J(\t^\alpha)| = |\J(\t^\beta)| = \deg(\t^\beta)
$$
Assume, arguing by contradiction, that $\t^\alpha \not = \t^\beta$. Then, as 
$\theta(\t^\alpha) = \theta(\t^\beta)$ and $\deg(\t^\alpha)=\deg(\t^\beta)$, 
the binomial $\t^\alpha-\t^\beta$ is Eulerian and thus one of $\t^\alpha$ or 
$\t^\beta$ is a leading term of $\G \cup \set{t_e}$, which is a contradiction.
\smallskip 

Let us now prove \eqref{eq: characterizing the image set}.
Suppose that $(T,i)=\vartheta(\t^\alpha)$, for some $\t^\alpha \in \B_\prec(I(X_G),t_e)$. 
As $\t^\alpha$ and $\t^\alpha t_e$ are square-free standard monomials of $I(X_G)$ 
and $\theta(\t^\alpha t_e) = T\sd e$, by (ii) of Theorem~\ref{thm: remainder of a monomial by the groebner basis}, 
$$
\tau_i(G,T)=\deg(\t^\alpha) = \deg(\t^\alpha t_e) -1 = \tau_{i+1}(G,T\sd e)-1.
$$

Conversely, suppose $\J_i(G,T)$ is non-empty and 
that $\tau_{i+1}(G,T\sd e)=\tau_{i}(G,T)+1$. 
Let $J$ be a minimum cardinality element of $\J_i(G,T)$ and 
let $\t^\alpha$ be the remainder in a standard expression with respect to $\G$ of the 
monomial given by the product of all edges in $J$. 
By (i) of Theorem~\ref{thm: remainder of a monomial by the groebner basis}, 
$\J(\t^\alpha) \in \J_i(G,T)$ and by (ii) of the same proposition, 
$|\J(\t^\alpha)|=\tau_i(G,T)=|J|$. This proves that $\t^\alpha$ is square-free, so that
if $t_e$ divides $\t^\alpha$, $|\J(\t^\alpha t_e^{-1})| = |\J(\t^\alpha)|-1$. However, as 
$\J(\t^\alpha t_e^{-1})$ is a $(T\sd e)$-join, this would imply that 
$$
\tau_{i+1}(G,T\sd e)\leq \tau_i(G,T)-1
$$
which is not true. We deduce that $\t^\alpha$ is not divisible by $t_e$ and therefore 
belongs to $\B_\prec(I(X_G),t_e)$. By what was said, 
it is clear that $\vartheta(\t^\alpha) = (T,i)$.
\end{proof}

\subsection*{Degree} The next result was first 
proved in \cite[Proposition~2.11]{NeVPVi20} by reducing to $K=\ZZ/3$ and to the vanishing ideal 
of the projective set parameterized by $G$. We can now give an alternative proof, drawing closely on the 
properties of the graph. Below, $b_0(G)$ denotes the number of connected components of $G$.

\begin{prop}\label{prop: degree}
The degree of $K[E_G]/I(X_G)$ is 
$$
\left \{
\begin{array}{l}
2^{|V_G|-b_0(G)}, \text{ if $G$ is non-bipartite}, \\
2^{|V_G|-b_0(G)-1}, \text{ if $G$ is bipartite.} 
\end{array}
\right.
$$
\end{prop}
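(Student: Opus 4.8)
The plan is to compute the degree of $K[E_G]/I(X_G)$ by counting standard monomials. Since $K[E_G]/I(X_G)$ is one-dimensional and Cohen--Macaulay, and $t_e$ is a regular element of degree one (by (ii) of Proposition~\ref{prop: key properties}), the degree of $K[E_G]/I(X_G)$ equals $\dim_K K[E_G]/(I(X_G),t_e) = \dim_K N$, which by Macaulay's Theorem is exactly $|\B_\prec(I(X_G),t_e)|$ for a grevlex order $\prec$ with $t_e$ last. So the task reduces to counting the standard monomials of $(I(X_G),t_e)$.

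Next I would invoke Theorem~\ref{thm: standard monomials and T-sets}, which says that $\vartheta$ is an injection of $\B_\prec(I(X_G),t_e)$ into $\PE(V_G)\times(\ZZ/2)$ with image $\set{(T,i) : \J_i(G,T)\neq\emptyset \text{ and } \tau_{i+1}(G,T\sd e)=\tau_i(G,T)+1}$. Thus I need to count the pairs $(T,i)$ in this image. I would first observe that $|\PE(V_G)| = 2^{|V_G|-b_0(G)}$: on each connected component with $m$ vertices there are $2^{m-1}$ subsets of even cardinality, and $\sum (m_k-1) = |V_G|-b_0(G)$ over the components. The main point is then to show that for each $T\in\PE(V_G)$, exactly one value of $i\in\ZZ/2$ (in the bipartite case) or exactly two values (in the non-bipartite case)\,---\,wait, that would overcount\,---\,so more precisely: in the bipartite case exactly one $i$ contributes, giving $2^{|V_G|-b_0(G)-1}$ after a parity argument, and in the non-bipartite case the count is $2^{|V_G|-b_0(G)}$.

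Let me describe the parity argument more carefully, as this is the heart of the proof. In the bipartite case, Lemma~\ref{lem: when do odd and even T-joins exist} tells us that for each $T$ only one of $\J_0(G,T),\J_1(G,T)$ is non-empty; write $i(T)$ for that unique parity. Moreover $T\sd e$ is again even, and its unique parity is $i(T)+1$ since adjoining the single edge $e$ to any $T$-join changes its cardinality by one, hence toggles parity; so the parities pair up. Now the condition $\tau_{i(T)+1}(G,T\sd e)=\tau_{i(T)}(G,T)+1$ must be analyzed: I expect it holds for exactly half of the even subsets $T$. The cleanest route is a pairing/involution: given the fixed edge $e=\set{a,b}$, the map $T\mapsto T\sd e$ is an involution on $\PE(V_G)$ without fixed points (since $a\neq b$), pairing up the $2^{|V_G|-b_0(G)}$ even subsets into $2^{|V_G|-b_0(G)-1}$ pairs; and for each such pair $\set{T, T\sd e}$ exactly one of the two contributes a standard monomial, because $\tau(G,T)$ and $\tau(G,T\sd e)$ differ by an odd number so the "$+1$" condition holds in exactly one direction. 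In the non-bipartite case, both $\J_0(G,T)$ and $\J_1(G,T)$ are non-empty for every $T$ by Lemma~\ref{lem: when do odd and even T-joins exist}; here I would show that for each $T$ exactly one of the two pairs $(T,0),(T,1)$ lies in the image (again by the parity relation between $\tau_0(G,T)$ and $\tau_1(G,T)$, which now both exist and differ by an odd amount, combined with the fact that $\tau_i(G,T\sd e)$ also exists for both $i$), yielding one standard monomial per $T$ and hence $2^{|V_G|-b_0(G)}$ in total.

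The main obstacle I anticipate is establishing the precise combinatorial claim that the "$+1$" condition $\tau_{i+1}(G,T\sd e)=\tau_i(G,T)+1$ selects exactly one representative from each relevant pair\,---\,i.e., ruling out both the possibility that neither holds and that both hold. In one direction the inequality $\tau_{i+1}(G,T\sd e)\leq\tau_i(G,T)+1$ is automatic (append $e$ to a minimum parity-$i$ $T$-join, or delete it, as in the last paragraph of the proof of Theorem~\ref{thm: standard monomials and T-sets}), and symmetrically $\tau_i(G,T)\leq\tau_{i+1}(G,T\sd e)+1$; since the two $\tau$-values have opposite parity they cannot be equal, so they differ by exactly $1$ in one of the two directions, which is exactly the statement that precisely one of $(T,i)$ or $(T\sd e, i+1)$ lies in the image. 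Once this dichotomy is pinned down, the count follows by summing over the $2^{|V_G|-b_0(G)-1}$ orbits of the involution $T\mapsto T\sd e$ (bipartite case) or over the $2^{|V_G|-b_0(G)}$ choices of $T$ with one parity each (non-bipartite case), and comparing with $\deg K[E_G]/I(X_G) = |\B_\prec(I(X_G),t_e)|$ completes the proof.
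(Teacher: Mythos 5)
Your overall strategy is exactly the paper's: reduce the degree to $|\B_\prec(I(X_G),t_e)|$ via the regular linear form $t_e$, count $\operatorname{Im}\vartheta$ using Theorem~\ref{thm: standard monomials and T-sets}, compute $|\PE(V_G)|=2^{|V_G|-b_0(G)}$, and use the dichotomy that exactly one of $(T,i)$, $(T\sd e,i+1)$ lies in $\operatorname{Im}\vartheta$ whenever $\J_i(G,T)\neq\emptyset$ (your last paragraph proves this just as the paper does, from the two inequalities $\tau_{i+1}(G,T\sd e)\le\tau_i(G,T)+1$ and $\tau_i(G,T)\le\tau_{i+1}(G,T\sd e)+1$ together with the opposite parities). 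The bipartite half of your count is fine. The non-bipartite half, however, rests on a false intermediate claim: it is \emph{not} true that for each $T$ exactly one of $(T,0)$, $(T,1)$ lies in the image, and the parity relation between $\tau_0(G,T)$ and $\tau_1(G,T)$ cannot deliver this, because the membership conditions for $(T,0)$ and $(T,1)$ compare $\tau_0(G,T)$ with $\tau_1(G,T\sd e)$ and $\tau_1(G,T)$ with $\tau_0(G,T\sd e)$ --- two unrelated comparisons. Concretely, take $G=\K_4$, $e=\set{1,2}$ and $T=\set{3,4}$: then $\tau_0(G,T)=2$, $\tau_1(G,T)=1$, $\tau_1(G,T\sd e)=\tau_1(G,\set{1,2,3,4})=3$ and $\tau_0(G,T\sd e)=2$, so \emph{both} $(T,0)$ and $(T,1)$ lie in $\operatorname{Im}\vartheta$, while $T\sd e=\set{1,2,3,4}$ contributes nothing. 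So ``one standard monomial per $T$'' fails pointwise.

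The repair is to aggregate over the correct orbits, which is what the paper does. The map $(T,i)\mapsto(T\sd e,i+1)$ is a fixed-point-free involution on $\PE(V_G)\times(\ZZ/2)$; by your dichotomy each of its two-element orbits meets $\operatorname{Im}\vartheta$ in exactly one point when the relevant sets of $T$-joins are non-empty, and in no point otherwise. By Lemma~\ref{lem: when do odd and even T-joins exist} all $2^{|V_G|-b_0(G)}$ orbits are ``live'' in the non-bipartite case and exactly half of them in the bipartite case, which gives the two stated values. (The paper packages this by grouping $(T,0),(T,1),(T\sd e,0),(T\sd e,1)$ into a single class $\FF_T$ and showing $|\operatorname{Im}\vartheta\cap\FF_T|$ equals $2$ or $1$.) Your total in the non-bipartite case is numerically right, but the per-$T$ bookkeeping you use to reach it is not a valid step as written.
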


\begin{proof}
Fix $e\in E_G$ and $\prec$ a grevlex order on $K[E_G]$ with $t_e$ last. 
As $t_e$ is regular on $K[E_G]/I(X_G)$ and its degree is one, the degree of $K[E_G]/I(X_G)$ 
is equal to the dimension of $K[E_G]/(I(X_G),t_e)$ as 
a vector space over $K$, which is then given by the number of elements of $\B_\prec(I(X_G),t_e)$. 
Let $\vartheta$ be the map of Theorem~\ref{thm: standard monomials and T-sets}.
Given $T\in \PE(V_G)$, consider the subset of $\PE(V_G)\times (\ZZ/2)$ given by  
$$
\FF_T = \set{(T,0),(T,1),(T\sd e , 0),(T\sd e ,1)}. 
$$
We claim that $|\operatorname{Im}\vartheta \cap \FF_T|=1$, if $G$ is bipartite, and 
$|\operatorname{Im}\vartheta \cap \FF_T|=2$, if $G$ is non-bipartite.
From this claim we deduce that the degree
of $K[E_G]/(I(X_G),t_e)$ is equal to $\frac{|\PE(V_G)|}{2}$ if $G$ 
is bipartite or $|\PE(V_G)|$, otherwise. To ease notation, let $r=b_0(G)$ and 
denote by $n_1,\dots, n_r$ the cardinalities of the sets of vertices of the connected components. 
Then $|\PE(V_G)|=2^{n_1-1}\cdots 2^{n_r-1}= 2^{|V_G|-r}$ and thus the result follows.
\smallskip 

Let us now prove the claim. For a fixed $T$, at least one of the sets $\J_0(G,T)$, $\J_1(G,T)$ is non-empty. 
Fix $i \in \ZZ/2$ such that $\J_i(G,T)\not = \emptyset$ and $J\in \J_i(G,T)$ of minimum cardinality. 
Then, as $J\sd \set e\in \J_{i+1}(G,T\sd e)$, the set $\J_{i+1}(G,T\sd e)$ is also non-empty. 
Moreover, since  $|J\sd \set e| \leq |J|+1$, 
$$\tau_{i+1}(G,T\sd e )\leq \tau_i(G,T)+1.$$ 
Repeating this argument with $T\sd e$ and $i+1\in \ZZ/2$ and combining the results, 
$$
\tau_{i+1}(G,T\sd e)-1 \leq \tau_i(G,T) \leq \tau_{i+1}(G,T\sd e)+1.
$$
As $\tau_i(G,T)$ and $\tau_{i+1}(G,T\sd e)$ have opposite parities, either 
$$
\renewcommand{\arraystretch}{1.3}
\begin{array}{l}
\tau_i(G,T)=\tau_{i+1}(G,T\sd e)-1 \text{ or } \\
\tau_i(G,T)=\tau_{i+1}(G,T\sd e)+1.
\end{array}
$$
In either case, using Theorem~\ref{thm: standard monomials and T-sets}, 
we deduce that  
$$
|\set{(T,i),(T\sd e,i+1)}\cap \operatorname{Im}\vartheta| = 1.
$$
If $G$ is bipartite then, by Lemma~\ref{lem: when do odd and even T-joins exist},
only one of $\J_0(G,T)$ or $\J_{1}(G,T)$ is non-empty and thus $|\operatorname{Im}\vartheta \cap \FF_T|=1$.
If $G$ is non-bipartite then both of these sets are non-empty
and hence $|\operatorname{Im}\vartheta \cap \FF_T|=2$. The claim is proved. 
\end{proof}

\subsection*{Regularity}
Theorem~\ref{thm: standard monomials and T-sets} will be used to express $\reg I(X_G)$ in a combinatorial way.
Before we do this, and to explain 
the connection with the initial results in this direction contained in \cite{NeVPVi20}, we need 
the following notion.

\begin{definition}\label{def: parity join}
$J\subset E_G$ is called a \emph{parity join} if and only if 
$|J\cap E_C|\leq \frac{|E_C|}{2}$, for every \emph{even} Eulerian subgraph of $G$.
\end{definition}

This definition is related to the notion of \emph{join} (\emph{cf}.~\cite{Fr93}); 
a subset $J\subset E_G$ is called a join if and only if 
$\ts |J\cap E_C|\leq \frac{|E_C|}{2}$,
for every Eulerian subgraph $C\subset G$. A join is always a parity join but not the way around.  
The relation between 
joins and $T$-joins is established in Guan's Theorem (\emph{cf}.~\cite{Gu60});
if $J$ is a minimum cardinality $T$-join then $J$ is a join and, 
vice-versa, if $J$ is a join then it is a minimum cardinality $T$-join, for 
$T$ equal to the set of odd degree vertices of the 
induced subgraph. A similar result holds for parity joins. 

\begin{lemma}\label{lem: parity joins are minimum fixed parity T-joins}
If $T$ is an even subset of vertices and $i\in \ZZ/2$, 
then any element of $\J_i(G,T)$ of minimum cardinality is a parity join. 
Conversely, any parity join, $J$, is a minimum cardinality 
element of $\J_i(G,T)$, where $T$ is the set of odd degree vertices
of the subgraph induced by $J$ and $i=|J|+2\ZZ$.
\end{lemma}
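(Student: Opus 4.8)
The plan is to prove the two implications of Lemma~\ref{lem: parity joins are minimum fixed parity T-joins} by relating the defining inequality of a parity join to the exchange of edges along even Eulerian subgraphs, just as in the classical proof of Guan's Theorem for ordinary joins. The key observation is that if $J,J'$ are both $T$-joins for the same $T\in\PE(V_G)$, then $J\sd J'$ is an Eulerian subgraph $C$, and moreover $|J\sd J'|\equiv_2|J|+|J'|$, so that $C$ is an \emph{even} Eulerian subgraph precisely when $|J|\equiv_2|J'|$; conversely, if $C$ is any even Eulerian subgraph, then $J\sd E_C$ is again a $T$-join of the same parity as $J$. This ``even Eulerian exchange'' is the link between parity joins and minimum fixed-parity $T$-joins, and its restriction to even subgraphs is exactly what distinguishes parity joins from ordinary joins.

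For the first implication, I would let $J\in\J_i(G,T)$ be of minimum cardinality and suppose, for contradiction, that $J$ is not a parity join: there is an even Eulerian subgraph $C$ with $|J\cap E_C|>\frac{|E_C|}{2}$. Then $J'=J\sd E_C$ is a $T$-join and, since $|E_C|$ is even, $|J'|=|J|+|E_C|-2|J\cap E_C|<|J|$ while $|J'|\equiv_2|J|$, so $J'\in\J_i(G,T)$ has strictly smaller cardinality than $J$ — contradicting minimality. Hence $J$ is a parity join. (One should note that $|J'|\equiv_2|J|$ uses evenness of $|E_C|$, which is where the ``even'' in Definition~\ref{def: parity join} is essential; an odd Eulerian subgraph would flip the parity and produce an element of $\J_{i+1}$, not $\J_i$.)

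For the converse, let $J$ be a parity join, let $T$ be the set of odd-degree vertices of the subgraph induced by $J$ — so $T\in\PE(V_G)$ and $J\in\J_i(G,T)$ with $i=|J|+2\ZZ$ — and suppose some $J'\in\J_i(G,T)$ satisfies $|J'|<|J|$. Put $C=J\sd J'$; this is an Eulerian subgraph, and since $|J|\equiv_2|J'|$ it is an \emph{even} Eulerian subgraph. Now $2|J\cap E_C| = |J\cap J|+|J\cap J'| - |J'\cap J'\setminus J| \cdots$ — more cleanly, using $J\cap E_C = J\setminus J'$ and $J'\cap E_C = J'\setminus J$ together with $|J\setminus J'|-|J'\setminus J|=|J|-|J'|>0$, one gets $|J\cap E_C|>|J'\cap E_C|$, hence $|J\cap E_C|>\frac{|E_C|}{2}$, contradicting that $J$ is a parity join. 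Therefore $J$ is of minimum cardinality in $\J_i(G,T)$.

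The main obstacle, such as it is, is purely bookkeeping: getting the parity of $|E_C|$ and the inequalities on $|J\cap E_C|$ versus $\tfrac12|E_C|$ exactly right, and being careful that the exchange $J\mapsto J\sd E_C$ stays within the \emph{same} parity class (which is exactly why ordinary Eulerian subgraphs must be replaced by even Eulerian subgraphs in Definition~\ref{def: parity join}). There is no deep difficulty beyond that; the argument is a parity-refined version of the standard symmetric-difference exchange used for Guan's Theorem, and it does not require any of the Gröbner-basis machinery — only the elementary fact that the symmetric difference of two $T$-joins is an Eulerian subgraph and vice versa.
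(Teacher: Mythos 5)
Your proof is correct and follows essentially the same route as the paper: both directions rest on the symmetric-difference exchange $J\mapsto J\sd E_C$ and the observation that the symmetric difference of two $T$-joins of equal parity is an even Eulerian subgraph. The paper phrases each direction as a chain of equivalences ($|J|\leq|J\sd E_C| \iff |J\cap E_C|\leq \frac{|E_C|}{2}$) rather than by contradiction, but the content is identical.
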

 
\begin{proof}
Let $J$ be a minimum cardinality element of $\J_i(G,T)$ and let $C$ be an even Eulerian subgraph of $G$. Then 
$J\sd E_C$ is a $T$-join with $|J\sd E_C|\equiv_2 |J|$ hence 
$$
\textstyle |J|\leq |J\sd E_C| \iff |J\cap E_C|\leq \frac{|E_C|}{2},
$$
i.e., $J$ is a parity join.
Conversely, let $J\subset E_G$ be a parity join and $T\subset V_G$ be the set of odd degree vertices of the 
subgraph of $G$ induced by $J$. Then $J$ is a $T$-join. 
Set $i=|J|+2\ZZ$ and let $J'\in \J_i(G,T)$. Then $J\sd J'$ defines an even Eulerian subgraph and therefore,
$$
\renewcommand{\arraystretch}{1.3}
\begin{array}{l}
|J\cap (J\sd J')| \leq \frac{|J\sd J'|}{2} \iff \\
|J|-|J\cap J'| \leq \frac{|J| + |J'|}{2} - |J\cap J'| \iff
|J|\leq |J'|.
\end{array}
$$
We deduce that $J$ is a minimum cardinality element of $\J_i(G,T)$.
\end{proof}

\begin{theorem}\label{thm: regularity and parity joins}
The regularity of $I(X_G)$ is the maximum cardinality of minimum 
fixed parity $T$-joins or, equivalently, the maximum cardinality of parity joins.
\end{theorem}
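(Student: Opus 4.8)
The plan is to deduce the theorem from Proposition~\ref{prop: regularity reduction}, Theorem~\ref{thm: standard monomials and T-sets}, and Lemma~\ref{lem: parity joins are minimum fixed parity T-joins}. By Lemma~\ref{lem: parity joins are minimum fixed parity T-joins}, the set of parity joins of $G$ coincides with the set of minimum cardinality elements of $\J_i(G,T)$, as $(T,i)$ ranges over all pairs with $\J_i(G,T)\neq\emptyset$; hence the two numbers in the statement are equal, and it suffices to show that $\reg I(X_G)$ equals the maximum of $\tau_i(G,T)$ over such pairs. First I would fix an edge $e\in E_G$ and a grevlex order $\prec$ on $K[E_G]$ with $t_e$ last. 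If $|E_G|=1$ then $I(X_G)=0$ and $\reg I(X_G)=1$ by convention, while $\PE(V_G)=\{\emptyset\}$ and the only $T$-join is the empty set, of cardinality $0$; one checks the formula directly in this degenerate case. So assume $|E_G|>1$.

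By Proposition~\ref{prop: regularity reduction}, $\reg I(X_G) = \max\{d : N_d\neq 0\}+1$, where $N=K[E_G]/(I(X_G),t_e)$. Since $\B_\prec(I(X_G),t_e)$ is a $K$-basis of $N$ (Macaulay's Theorem), $\max\{d : N_d\neq 0\}$ is the largest degree of a standard monomial, i.e.\ $\max\{\deg(\t^\alpha) : \t^\alpha\in\B_\prec(I(X_G),t_e)\}$. Now apply the bijection $\vartheta$ of Theorem~\ref{thm: standard monomials and T-sets}: for $\t^\alpha\in\B_\prec(I(X_G),t_e)$, the proof of that theorem shows $\t^\alpha$ is square-free with $\deg(\t^\alpha)=|\J(\t^\alpha)|=\tau_i(G,T)$, where $(T,i)=\vartheta(\t^\alpha)$. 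Conversely every $(T,i)\in\operatorname{Im}\vartheta$ arises this way. Hence
\begin{equation}\label{eq: reg equals max tau}
\max\{d : N_d\neq 0\} = \max\{\tau_i(G,T) : (T,i)\in\operatorname{Im}\vartheta\}.
\end{equation}
Thus $\reg I(X_G) = \max\{\tau_i(G,T) : (T,i)\in\operatorname{Im}\vartheta\}+1$.

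The remaining point — which I expect to be the main obstacle — is to show that the right-hand side of \eqref{eq: reg equals max tau}, \emph{plus one}, equals $\max\{\tau_i(G,T) : \J_i(G,T)\neq\emptyset\}$, i.e.\ that maximizing $\tau$ over the restricted set $\operatorname{Im}\vartheta$ (those $(T,i)$ with the extra condition $\tau_{i+1}(G,T\sd e)=\tau_i(G,T)+1$) and adding $1$ gives the same as maximizing $\tau$ over \emph{all} feasible pairs. The key is the inequality established in the proof of Proposition~\ref{prop: degree}: for any $T\in\PE(V_G)$ and any $i$ with $\J_i(G,T)\neq\emptyset$, the set $\J_{i+1}(G,T\sd e)$ is also non-empty, and $\tau_i(G,T)$ and $\tau_{i+1}(G,T\sd e)$ differ by exactly $1$ (they have opposite parities and differ by at most $1$ in absolute value). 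Therefore, given a pair $(T,i)$ achieving the global maximum $m:=\max\{\tau_i(G,T)\}$, either $\tau_{i+1}(G,T\sd e)=\tau_i(G,T)-1$, in which case $\tau_i(G,T)=\tau_{(i+1)+1}(G,(T\sd e)\sd e)$ wait — more carefully: if $\tau_{i+1}(G,T\sd e)=\tau_i(G,T)-1=m-1$, then applying the condition in \eqref{eq: characterizing the image set} with the pair $(T\sd e, i+1)$ gives $\tau_{i}(G,T)=\tau_{(i+1)+1}(G,(T\sd e)\sd e)$, i.e. $(T\sd e,i+1)\in\operatorname{Im}\vartheta$ with $\tau_{i+1}(G,T\sd e)=m-1$, so $\max_{\operatorname{Im}\vartheta}\tau\ge m-1$, giving $\reg I(X_G)\ge m$; and if instead $\tau_{i+1}(G,T\sd e)=m+1$ this would contradict maximality of $m$. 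Conversely, if $\tau_{i+1}(G,T\sd e)=\tau_i(G,T)+1=m+1$ is impossible by maximality, so the first case always holds, yielding $\reg I(X_G)\ge m$. For the reverse inequality, any $(T,i)\in\operatorname{Im}\vartheta$ has $\tau_i(G,T)\le m$, hence $\reg I(X_G)=\max_{\operatorname{Im}\vartheta}\tau+1\le m+1$; but one must rule out equality $m+1$, which again follows because a pair realizing $\tau=m$ in $\operatorname{Im}\vartheta$ would force $\tau_{i+1}(G,T\sd e)=m+1>m$ by \eqref{eq: characterizing the image set}, contradicting maximality of $m$. Assembling these bounds gives $\reg I(X_G)=m$, completing the proof. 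The delicate bookkeeping is entirely in this parity-shuffling argument between $(T,i)$ and $(T\sd e,i+1)$; everything else is a direct application of the cited results.
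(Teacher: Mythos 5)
Your proof is correct, and its skeleton matches the paper's: reduce via Proposition~\ref{prop: regularity reduction} and Macaulay's theorem to the maximum degree of a monomial in $\B_\prec(I(X_G),t_e)$, translate this through $\vartheta$ into $\max\set{\tau_i(G,T) : (T,i)\in\operatorname{Im}\vartheta}+1$, and compare with the unrestricted maximum $m=\max\set{\tau_i(G,T) : \J_i(G,T)\neq\emptyset}$. Where you genuinely diverge is in that last, crucial comparison. The paper argues at the level of monomials: for a pair $(T_0,k)$ attaining $m$ it reduces the product of the edges of a minimum $T_0$-join modulo $\G$, uses maximality to conclude $(T_0,k)\notin\operatorname{Im}\vartheta$, deduces that $t_e$ must divide the resulting square-free standard monomial $\t^\alpha$, and exhibits $\t^\alpha t_e^{-1}\in\B_\prec(I(X_G),t_e)$ of degree $m-1$. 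You instead stay entirely inside the combinatorial characterization \eqref{eq: characterizing the image set}: the fact (extracted from the proof of Proposition~\ref{prop: degree}) that $\tau_i(G,T)$ and $\tau_{i+1}(G,T\sd e)$ always differ by exactly one, combined with maximality, forces $\tau_{i+1}(G,T\sd e)=m-1$ for an optimal pair, whence $(T\sd e,\,i+1)$ itself satisfies the membership condition with $\tau$-value $m-1$; the upper bound is the same forcing read in reverse. This gives a cleaner, purely arithmetic finish that never returns to division by $\G$, at the mild cost of invoking an inequality the paper establishes only inside another proof rather than as a stated lemma (it would be worth isolating it). Two small blemishes to fix: in the $|E_G|=1$ aside, $\PE(V_G)=\set{\emptyset, V_G}$ rather than $\set{\emptyset}$, and the maximum of $\tau$ is $1$ (attained by $\tau_1(G,V_G)$), which still matches $\reg I(X_G)=1$; and the sentence containing ``wait --- more carefully'' should be rewritten, although the corrected computation that follows it is right.
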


\begin{proof}
Let $e\in E_G$ and fix $\prec$ a grevlex order on $K[E_G]$ with $t_e$ last.
By Proposition~\ref{prop: regularity reduction}, we get
$\reg I(X_G)=\max\set{d: N_d\not = 0}+1$, where $N=K[E_G]/(I(X_G),t_e)$.
Another way of expressing this, using $\B_\prec(I(X_G),t_e)$, is
$$
\reg I(X_G) = \max \set{\deg(\t^\alpha) : \t^\alpha \in \B_\prec(I(X_G),t_e)}+1.
$$
Let $\t^\alpha\in \B_\prec(I(X_G),t_e)$ be of maximum degree and let $\vartheta$ 
be as in Theorem~\ref{thm: standard monomials and T-sets}. Denote  
\mbox{$(T,i)=\vartheta (\t^\alpha)$}. Then, 
by (ii) of Theorem~\ref{thm: remainder of a monomial by the groebner basis} 
and Theorem~\ref{thm: standard monomials and T-sets}, 
$$
\deg(\t^\alpha)+1=\tau_i(G,T)+1= \tau_{i+1}(G,T\sd e)
$$ 
and so $\reg I(X_G) = \tau_{i+1}(G,T\sd e)$.
We conclude that
$$
\reg I(X_G) \leq \max \set{\tau_i(G,T) : T\in \E(V_G),\; i\in \ZZ/2,\; \J_i(G,T)\not = \emptyset}.
$$
To prove the opposite inequality, let $T_0\in\PE(V_G)$, $k\in \ZZ/2$ be such that
$\tau_k(G,T_0)$ is the maximum of the set above. Fix $J\in \J_k(G,T_0)$ with 
$|J|=\tau_k(G,T_0)$ and let $\t^\alpha$ be the remainder in a standard expression 
with respect to $\G$ of the monomial given by the product of all edges in $J$. Then,
arguing as in the proof of Theorem~\ref{thm: standard monomials and T-sets}, 
we deduce that $\t^\alpha$ is square-free,  
$\J(\t^\alpha) \in \J_k(G,T_0)$ and \mbox{$\deg(\t^\alpha) = \tau_k(G,T_0)$}.
By the maximality of $\tau_k(G,T_0)$ we get 
$$
\tau_{k+1}(G,T_0\sd e)\leq \tau_k(G,T_0)
$$ 
which, by Theorem~\ref{thm: standard monomials and T-sets}, 
means that $(T_0,k)\not \in \operatorname{Im} \vartheta$.
This implies that $\t^\alpha$ is not a standard element
of $(I(X_G),t_e)$; hence $t_e$ divides $\t^\alpha$ and $\t^\alpha t_e^{-1} \in \B_\prec(I(X_G),t_e)$.
Accordingly, $\reg I(X_G) \geq \deg(\t^\alpha t_e^{-1}) +1 = \deg(\t^\alpha) = \tau_k(G,T_0)$.
\end{proof}

In \cite[Theorem~4.5]{NeVPVi20} it was shown that for a bipartite graph the
re\-gu\-la\-ri\-ty of $I(X_G)$ is equal to the maximum cardinality of a join. 
Theorem~\ref{thm: regularity and parity joins} is therefore a generalization of this result.

\begin{example}
By Theorem~\ref{thm: regularity and parity joins}, the value of $\reg I(X_G)$, for
$G=\K_n$, can now be obtained by an analysis of the minimum fixed parity $T$-joins. 
(This was done in \cite{NeVPVi20} by reducing to $K=\mathbb{Z}/3$ 
and using the results of \cite{GoReSa13}.)
From Table~\ref{tbl: minimum and odd T-joins in Kn}, if $n=3$, $\reg I(X_G)=3$; obtained
by taking a minimum cardinality element of $\J_1(G,\emptyset)$.
(Note that $G=\K_3$ is listed in Table~\ref{tbl: values for special families of graphs} in the family of 
non-bipartite uni-cyclic graphs.) If $n\geq 4$, the maximum cardinality of a fixed parity $T$-join is 
$r = \lfloor \nicefrac{n}{2}\rfloor +1$. Denoting $i=r+2\ZZ$, we see that $r=\tau_i(G,V_G)$,
if $n$ is even, and, if $n$ is odd, $r=\tau_i(G,T)$ with 
$T=V_G\setminus \set{v}$, for any choice of $v\in V_G$.
\end{example}

\end{document}